\newtheorem{theorem}{Theorem}[section]
\newtheorem{lemma}[theorem]{Lemma}
\newtheorem{corollary}[theorem]{Corollary}
\newtheorem*{thmA}{Theorem A}
\newtheorem*{thmB}{Theorem B}
\theoremstyle{definition}
\newtheorem{definition}[theorem]{Definition}
\newtheorem{que}{Question}
\theoremstyle{remark}
\newtheorem{remark}[theorem]{Remark}
\DeclareMathOperator{\diam}{diam}
\DeclareMathOperator{\gend}{end}
\DeclareMathOperator{\mesh}{mesh}
\begin{document}

\title{Cantor subsystems on the Gehman dendrite}



\author{Piotr Oprocha}
\address{AGH University of Krakow, Faculty of Applied Mathematics, al. Adama Mickiewicza~30, 30-059 Krak\'ow, Poland \&
Centre of Excellence
IT4Innovations - Institute for Research and Applications of Fuzzy Modeling,
University of Ostrava, 30. Dubna 22, 701 03 Ostrava 1, Czech Republic
}

\email{oprocha@agh.edu.pl}

\author{Jakub Tomaszewski}
\address{AGH University of Krakow, Faculty of Applied Mathematics, al. Adama Mickiewicza~30, 30-059 Krak\'ow, Poland
}

\email{tomaszew@agh.edu.pl}

\subjclass[2010]{Primary 37B45; Secondary 54H20}
\keywords{Cantor system, Gehman dendrite, mixing map}

\date{}

\dedicatory{}


\begin{abstract}
In the present note we focus on dynamics on the Gehman dendrite $\mathcal{G}$. It is well-known that the set of its endpoints is homeomorphic to a standard Cantor ternary set. 
For any given surjective Cantor system $\mathcal{C}$ we provide constructions of (i) a mixing but not exact and (ii) an exact map on $\mathcal{G}$, such that in both cases the subsystem formed by $\text{End}(\mathcal{G})$ is conjugate to the initially chosen system on $\mathcal{C}$.
\end{abstract}

\maketitle

\section{Introduction}
The dynamics on the Gehman dendrite $\mathcal{G}$ is relatively well understood and leads to interesting examples.
The authors of \cite{Kocan} provided an example of a map $F\colon \mathcal{G}\to\mathcal{G}$ such that the subsystem $(\text{End}(\mathcal{G}), F|_{\text{End}(\mathcal{G})})$ is conjugate to a full shift on two symbols. By the well known characterization of Gehman dendrite \cite{Charatonik} it immediately leads to examples of map $F$ with any infinite subshift without isolated points conjugate to $F|_{\text{End}(\mathcal{G})}$, e.g. see \cite{Li}. These maps are not transitive, which leads to the question that motivates this paper:
\begin{que}\label{q1}
Which Cantor dynamical systems can arise as invariant sets of transitive maps on the Gehman dendrite?
\end{que}

Before providing an answer, let us recall a few related questions. In \cite{Bald01}, Baldwin asked if the infimum of entropies of transitive maps on any dendrite which is not a tree is zero (it is known that it is positive on trees) and if zero is attainable by a transitive map on any dendrite. He also developed a generalization of kneading theory, to describe dynamics on some dendrites \cite{Bald07}. The question of Baldwin was partially answered by \v{S}pitalsk\'y in \cite{Spi}, who proved that if $D$ is a non-degenerate dendrite such that no subtree of it contains all free arcs of $D$ then $D$ admits a transitive (or even exact) map with arbitrarily small entropy. Clearly, the Gehman dendrite is a special case of dendrites covered by this result. It is also worth recalling another result by \v{S}pitalsk\'y \cite{Spi2}, which shows that on Gehman dendrite transitive maps always have positive entropy.

The above considerations somewhat touch the surface of a more general question: what different types of chaos (and how many of them at the same time) can we observe in a dynamical system? There are many classical variations of chaos we may consider, e.g. Devaney chaos, Auslander-Yorke chaos, but also Li-Yorke chaos, $\omega$-chaos, distributional chaos and many more. By studying invariant subsystems in the dendrite case, we shed a light onto a subject of admittable chaotic behavior in the local sense. What's more, we take our considerations even further, setting the goal to look for global chaotic properties as well. 

The Cantor systems has been one of the key research interests in dynamics over the recent years. Notable advances in this topic include e.g. a study of generic maps in the space of homeomorphisms $\mathcal{H}(\mathcal{C})$. It has been shown by Glasner and Weiss in \cite{Glasner} that this space contains a dense conjugacy class. This result has been presented also in \cite{monografia}, in which the authors asked whether $\mathcal{H}(\mathcal{C})$ contains a comeager conjugacy class. A positive answer to that question has been given by Kechris and Rosenthal in \cite{Kechris}
and later extended by many others, including \cite{Akin, Darji} or \cite{Kupka} to name a few. These result show that in fact there is one typical map on Cantor set which is far from being chaotic of any type. Fortunately, its complement contains a plethora of admissible dynamics.
From that point of view especially significant is  the theory of symbolic extensions for dynamical systems, a topic that arose from a statement included in \cite{Weiss}: every homeomorphism of a compact metric space admits a zero-dimensional extension with the same topological entropy. This theory has been thoroughly studied by Downarowicz et al. over the last two decades, see e.g. \cite{Boyle,Downarowicz}.

Let us now turn to answer to Question~\ref{q1}.
The key ingredient is to properly define the action on the branching points of $\mathcal{G}$ and then extend the action onto edges of $\mathcal{G}$. The natural extension of $F$ onto the set $\text{End}(\mathcal{G})$ will result with the subsystem $(\text{End}(\mathcal{G}), F|_{\text{End}(\mathcal{G})})$ conjugate to the Cantor system of choice. The control in the construction will be provided by graph covers from \cite{Shimomura}, which represent dynamics on the Cantor set $\mathcal{C}$ as an inverse limit of a special kind. This leads to the following.

\begin{thmA}\label{thm:A}
Let $(\mathcal{C}, f)$ be a Cantor dynamical system. Then there exists a pure mixing dynamical system $(\mathcal{G}, F)$ such that the subsystem $(\text{End}(\mathcal{G}), F|_{\text{End}(\mathcal{G})})$ is conjugate to $(\mathcal{C}, f)$.
\end{thmA}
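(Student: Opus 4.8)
The plan is to realize the Cantor dynamical system $(\mathcal{C},f)$ on the endpoint set $\text{End}(\mathcal{G})$ by building $F$ layer by layer along the natural tree structure of the Gehman dendrite, using a graph cover presentation of $(\mathcal{C},f)$ in the sense of Shimomura \cite{Shimomura}. Recall that the Gehman dendrite is the unique dendrite whose branching points all have order three and whose endpoint set is a Cantor set; concretely one may take $\mathcal{G}$ to be built from the infinite binary tree, with a branch point $b_w$ for every finite binary word $w$, edges joining $b_w$ to $b_{w0}$ and $b_{w1}$, and $\text{End}(\mathcal{G})$ identified with $\{0,1\}^{\mathbb{N}}$ sitting at the "ends" of the infinite branches. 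The first step is therefore to fix such a model and to fix a homeomorphism $\text{End}(\mathcal{G})\cong\mathcal{C}$; what we must produce is a continuous surjection $F\colon\mathcal{G}\to\mathcal{G}$ that restricts to (a conjugate copy of) $f$ on the endpoints, is mixing, and is not exact.

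Next I would use the graph cover representation: $(\mathcal{C},f)$ is the inverse limit of a sequence of finite directed graphs $G_n$ with surjective bonding maps, where the dynamics is encoded by the edges. The key construction step is to encode the combinatorics of these graphs into the branching structure of $\mathcal{G}$ by choosing, at depth $n$ of the binary tree, a clopen partition $\{U_v : v\in V(G_n)\}$ of $\text{End}(\mathcal{G})$ refining the previous one and matching the partition of $\mathcal{C}$ coming from the vertices of $G_n$; each $U_v$ is the endpoint set of a subdendrite $\mathcal{G}_v$ hanging below some branch point $b_{w(v)}$. We then define $F$ first on the (countable, dense) set of branch points so that $F(b_{w(v)})$ lies on the arc leading into $\mathcal{G}_{v'}$ whenever $v\to v'$ is an edge of $G_n$, and extend affinely (with controlled Lipschitz behavior, e.g. using the metric in which each edge has length $2^{-n}$) along each edge. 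Continuity of $F$ at the endpoints follows because mesh of the partitions tends to zero and $F$ maps $\mathcal{G}_v$ into a controlled neighbourhood of $\bigcup_{v\to v'}\mathcal{G}_{v'}$; the restriction $F|_{\text{End}(\mathcal{G})}$ is then forced to be the coordinate-shift on the inverse limit, i.e. conjugate to $f$.

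To get mixing, I would exploit the freedom in how the edges of each $G_n$ are routed through the tree together with the fact that $F$ can be made to "stretch" every edge of $\mathcal{G}$ over a long path — this is the standard mechanism by which interval-like maps on dendrites become mixing, and here we have the extra room provided by $\v{S}$pitalsk\'y-type expansion along free arcs. Concretely, I would arrange that for every edge $e$ of $\mathcal{G}$ there is $N$ with $F^N(e)\supseteq\mathcal{G}$; combined with the fact that open sets of $\mathcal{G}$ contain edges, this yields mixing. To keep $F$ \emph{not} exact, I would reserve a fixed branch point $b^*$ (equivalently a clopen subset of $\text{End}(\mathcal{G})$) whose image always stays inside a proper subdendrite determined by $f$-invariance structure — more robustly, one ensures there is a nonempty open set $W$ with $F^n(W)\neq\mathcal{G}$ for all $n$, which is automatic if the cover presentation is chosen so that some vertex of every $G_n$ is never "fully covered" in one step; since exactness would force $F^n(W)=\mathcal{G}$ eventually, this obstruction is exactly what separates mixing from exactness.

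The main obstacle I expect is the simultaneous control of three competing demands: (a) continuity of $F$ at every endpoint, which requires the images of deep subdendrites to shrink appropriately and hence limits how aggressively edges may be stretched near the ends; (b) mixing, which requires enough global stretching; and (c) non-exactness, which requires a persistent "deficiency" in the covering. Reconciling (a) with (b) is the delicate point: the expansion must be concentrated on the free-arc part of $\mathcal{G}$ (away from the Cantor set of endpoints, where $F$ must behave like the given homeomorphism-free Cantor map $f$ and hence be nearly an isometry on small clopen sets), so the bookkeeping in the graph-cover construction has to separate the "endpoint-coding" coordinates from the "edge-stretching" coordinates. Verifying that the resulting $F$ is well-defined and continuous — in particular at $b^*$ and along the limit of the branch points — will be the technical heart of the argument, and I would handle it by an explicit estimate of $\mesh$ and $\diam$ of the images $F(\mathcal{G}_v)$ in terms of $n$.
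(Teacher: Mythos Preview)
Your overall framework --- Shimomura's graph-cover presentation, defining $F$ on branch points via the cover combinatorics, then extending piecewise linearly along edges, with continuity at endpoints coming from the shrinking mesh --- is exactly the paper's approach.

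The gap is in how you separate mixing from exactness. Your stated mixing criterion, ``for every edge $e$ there is $N$ with $F^N(e)\supseteq\mathcal{G}$'', is not mixing but exactness: since every nonempty open subset of $\mathcal{G}$ contains a subarc of some edge, this would force $F^N(U)=\mathcal{G}$ for every open $U$. You then try to recover non-exactness by reserving a branch point, or equivalently a clopen set of endpoints, that ``stays inside a proper subdendrite'', or by choosing the cover so that some vertex of every $G_n$ is ``never fully covered''. But this imposes a constraint on the given Cantor system $(\mathcal{C},f)$: if $f$ is, say, minimal or itself mixing, no clopen set has forward orbit confined to a proper clopen set, and no such vertex exists. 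Since $(\mathcal{C},f)$ is arbitrary, you cannot extract the obstruction to exactness from its structure; and in any case this mechanism directly contradicts the covering property you invoked for mixing.

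The paper's resolution is that non-exactness is automatic from the construction and has nothing to do with $f$. Under the piecewise-linear extension, the image of any arc under any iterate $F^n$ is a finite subtree of $\mathcal{S}$, hence $F^n(e)\cap\text{End}(\mathcal{S})=\emptyset$ for every edge $e$ and every $n$. Mixing holds because these finite subtrees eventually contain any prescribed finite subtree (the ``flooding'' argument: once an iterate of a subarc contains a first-level edge, successive images swallow level after level), so $F^n(e)$ meets every open set for all large $n$; but they never equal $\mathcal{S}$, since they miss the entire Cantor set of endpoints. In short, the obstruction to exactness lives in $\text{End}(\mathcal{G})$ itself, not in any reserved branch point or special feature of the graph cover, and your mixing argument should aim only for ``$F^n(e)$ eventually contains any finite subtree'' rather than ``$F^n(e)=\mathcal{G}$''.
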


The resulting map exhibits at least one global type of chaos, in the Devaney sense, as well as an arbitrary variety of local chaotic behaviors one may choose together with the initial Cantor system to extend.

The construction provided in the proof of Theorem A can be modified to obtain an exact map on the Gehman dendrite while preserving the conjugacy of the endpoint subsystem. It is based on a result from \cite{Nadler} (see Theorem~\ref{GMT}) which utility in construction of exact maps we learned from Karasová and Vejnar (see e.g. \cite{Klara} for their application of this method). This modification forces to improve our observable chaos list: we no longer have just Devaney chaos, but the more complex exact Devaney chaos (see \cite{exact_devaney} for more details).

\begin{thmB}\label{thm:B}
Let $(\mathcal{C}, f)$ be a Cantor dynamical system. Then there exists an exact dynamical system $(\mathcal{G}, F)$ such that the subsystem $(\text{End}(\mathcal{G}), F|_{\text{End}(\mathcal{G})})$ is conjugate to $(\mathcal{C}, f)$.
\end{thmB}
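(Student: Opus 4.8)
The plan is to start from the map $F\colon\mathcal{G}\to\mathcal{G}$ produced in Theorem~A, whose endpoint subsystem already is conjugate to $(\mathcal{C},f)$ and which is pure mixing, and to upgrade it to an exact map without disturbing the action on $\mathrm{End}(\mathcal{G})$. The natural tool is the classical result from \cite{Nadler} quoted as Theorem~\ref{GMT}: a mixing map of a graph (or, more generally, a suitable continuum) can be made exact by a local modification that ``spreads out'' the image of small arcs across the whole space. Concretely, the Gehman dendrite is an increasing union of finite trees $T_1\subset T_2\subset\cdots$ together with the Cantor set of endpoints, and $F$ was built level by level along the graph cover representation of \cite{Shimomura}. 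I would fix one free arc (equivalently, one edge) $J$ of $\mathcal{G}$ lying deep in the construction, and reroute $F$ on a tiny subarc $J'\subset J$ so that $F(J')$ traverses a connecting path reaching ``the top'' of $\mathcal{G}$; iterating, the orbit of $J'$ eventually covers every edge, hence every open subset of $\mathcal{G}$.

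The key steps, in order, would be: (1) Recall from the proof of Theorem~A the skeleton of $F$: the nested trees $T_n$, the branching points and their images, and in particular the fact that $F$ maps edges onto unions of edges in a controlled, ``expanding along the cover'' fashion, so that $F$ is mixing. (2) Invoke Theorem~\ref{GMT} in the form: if $G$ is a mixing map of $\mathcal{G}$ and $U$ is a nonempty open set, one can perturb $G$ on a small arc to a map $G'$ agreeing with $G$ off that arc and with $G'^{\,k}(V)=\mathcal{G}$ for some open $V$ and some $k$; since the perturbation is supported on the interior of a free arc, it does not touch $\mathrm{End}(\mathcal{G})$, so $G'|_{\mathrm{End}(\mathcal{G})}=G|_{\mathrm{End}(\mathcal{G})}$. (3) Check that exactness of the resulting $F$ follows: given any nonempty open $W\subset\mathcal{G}$, mixing of $F$ gives some iterate $F^m(W)$ meeting the controlled arc $J'$ in a set with nonempty interior relative to $J'$, and then a further bounded number of iterates blows it up to all of $\mathcal{G}$; combining, $F^{m+k}(W)=\mathcal{G}$. (4) Confirm that continuity and surjectivity of $F$ are preserved by the local surgery (the reroute is built from arcs and is continuous, and surjectivity only improves), and that the endpoint subsystem, being untouched, remains conjugate to $(\mathcal{C},f)$.

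The main obstacle I anticipate is making the ``spreading'' modification compatible with the infinitary structure of $\mathcal{G}$: unlike a finite graph, $\mathcal{G}$ has infinitely many edges accumulating on the Cantor set of endpoints, and the images $F^k(J')$ must be arranged to sweep across \emph{all} of them, including those arbitrarily close to $\mathrm{End}(\mathcal{G})$, while the modification itself stays bounded away from the endpoints so the subsystem is preserved. The way around this is that $F$ already mixes: for each edge $e$ of each $T_n$ there is $N_e$ with $F^{N_e}(J')\supset e$, so one does not need the surgery to reach every edge directly — one only needs it to guarantee that from a single well-chosen arc the forward orbit catches a ``generating'' edge that under $F$ already expands to cover every $T_n$ in the limit. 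Verifying that the $\mathrm{diam}$ of $T_n$-complements shrinks to $0$ (a standard property of the Gehman dendrite with $\mathrm{mesh}\to 0$) then upgrades ``covers every edge'' to ``equals $\mathcal{G}$'', closing the argument. A secondary, more technical point is to re-examine the proof of Theorem~A to ensure the edge on which we operate can be chosen with enough ``room'' (a nondegenerate free subarc mapped in an orientation-controlled way), but this should be immediate from the graph-cover construction.
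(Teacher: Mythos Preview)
Your plan has a genuine gap, and it stems from a misreading of Theorem~\ref{GMT}. That theorem is \emph{not} a perturbation lemma for mixing maps; it is Nadler's result that a decreasing sequence of upper semicontinuous set-valued functions $F_n\colon X\to 2^Y$ with $\diam F_n(x)\to 0$ and $F_n(X)=Y$ yields a continuous surjection $f(x)=\bigcap_n F_n(x)$. The paper uses it precisely to build a continuous \emph{space-filling} map from a single first-level edge $[c,c_0]$ onto the entire dendrite $\mathcal{S}$, endpoints included. Your paraphrase (``perturb a mixing $G$ on a small arc so that $G'^{\,k}(V)=\mathcal{G}$'') is not the statement of the theorem and does not follow from it.

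More importantly, the modification you describe cannot produce exactness. Rerouting $F$ on a subarc $J'$ so that $F(J')$ is a path to the root only feeds $J'$ into the same flooding mechanism already present in $F$; but the proof of Lemma~\ref{eventually_cover} shows exactly why that mechanism is \emph{pure} mixing and not exact: for every $n$ one has $F^n(e_0)\cap\mathrm{End}(\mathcal{S})=\emptyset$. Each iterate covers only finitely many levels of edges, never all of them, so no $F^n(J')$ is dense, let alone equal to $\mathcal{S}$. Your closing step (``covers every $T_n$ in the limit, and $\diam(T_n$-complements$)\to 0$, hence equals $\mathcal{G}$'') conflates $\bigcup_n F^n(U)$ being dense with some single $F^N(U)$ being all of $\mathcal{G}$; the former holds already for the pure mixing $F$, the latter is what you must prove and your surgery does not deliver it. The paper avoids this by making the first-level edge map \emph{onto} $\mathcal{S}$ in one step via the Nadler construction, so that once an iterate of an open set swallows $[c,c_0]$, the very next iterate is all of $\mathcal{S}$. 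Any correct argument must, one way or another, force some arc to hit endpoints under a bounded iterate, and that requires a genuinely infinitary construction on that arc, not a finite reroute.
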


 In our constructions we use large stretching and multiple overlapping of images of arcs, which suggests that the entropy of constructed maps can be arbitrarily large and thus allows for yet another, entropy indicated, chaos to be observed in the resulting dynamical system. This motivates the following question which we leave for further research.

\begin{que}
Let $(\mathcal{C}, f)$ be a surjective Cantor system and let $\varepsilon>0$.
Does there exists a transitive dynamical system $(\mathcal{G}, F)$ such that the subsystem $(\text{End}(\mathcal{G}), F|_{\text{End}(\mathcal{G})})$ is conjugate to $(\mathcal{C}, f)$ and topological entropy satisfies $h_\text{top}(f)>h_\text{top}(F)-\varepsilon$?
\end{que}
In other words, can any Cantor dynamical system be extended to transitive dynamical system of the Gehman dendrite without increasing entropy more than by $\varepsilon$?

\section{Preliminaries}
\par Let $(X,f)$ be a topological dynamical system (a TDS for short), where $X$ is assumed to be a compact metric space and $f\colon X\to X$ is a continuous surjection. For such TDS we define the following properties, characterizing its local behavior over time:

\begin{definition}
We say that a TDS $(X,f)$ is transitive if for each pair of nonempty open subsets $U,V\subset X$ there exists $n\in\mathbb{N}$ such that $f^{-n}(U)\cap V\neq\emptyset$.
\end{definition}

\begin{definition}
    We say that a TDS $(X,f)$ is topologically mixing if for every nonempty open sets $U,V\subset X$ there exists $n_0\in\mathbb{N}$ such that for all $n\geq n_0$ we have $f^{-n}(U)\cap V\neq\emptyset$.
\end{definition}

\begin{definition}
    We say that a TDS $(X,f)$ is exact if for each nonempty open set $U\subset X$ there exists $n\in\mathbb{N}$ such that $f^{n}(U)=X$.
\end{definition}
We often say that a map $f\colon X\to X$ is transitive (respectively mixing, exact) and by that we mean that the TDS $(X,f)$  has that property. Given the space $X$, there exist maps on $X$ which are mixing but not exact. We call those maps pure mixing.

\begin{definition}
    Let $(X,f)$ and $(Y,g)$ be two TDS. We say $(Y,g)$ is semiconjugate to $(X,f)$ if there exists a continuous surjection $\phi\colon X\to Y$ such that $\phi\circ f=g\circ\phi$. Moreover, if $\phi$ is a homeomorphism, then we say that $(X,f)$ and $(Y,g)$ are conjugate.
\end{definition}

\subsection{Graph covers for Cantor systems}\label{approximation}
In this section we will introduce the necessary notions to approximate Cantor dynamics. Our presentation follows \cite{Shimomura}.
\par Let $G=(V,E)$ and $G'=(V', E')$ be graphs. A map $\phi\colon V\to V'$ is a graph homomorphism if $(u,v)\in E\implies (\phi(u), \phi(v))\in E'$. Adopting the notation from \cite{Shimomura}, whenever this is the case, we write $\phi\colon G\to G'$ or $\phi\colon (V,E)\to (V', E')$.
\begin{definition}
    Let $\phi\colon G\to G'$ be a graph homomorphism. We say that $\phi$ is vertex-surjective if $V'=\phi(V)$. Similarily, we say that $\phi$ is edge-surjective if $E'=(\phi\times\phi)(E)$.
\end{definition}
\begin{definition}
    Let $\phi\colon (V,E)\to (V', E')$ be a graph homomorphism. Assume that the following condition is satisfied:
    \begin{equation*}
        (u,v), (u,v')\in E\implies \phi(v)=\phi(v').
    \end{equation*}
    Then we say that $\phi$ is + directional.
\end{definition}
\begin{definition}
    A graph homomorphism $\phi\colon G\to G'$ is called a cover if it is a + directional edge-surjective graph homomorphism.
\end{definition}
Note that, following \cite{Shimomura}, we will use the notion of a "graph cover" and "cover" interchangeably, assuming that these notions signify the same mathematical object.
\begin{definition}
    We say that a family $\mathcal{U}$ of open subsets of the space $X$ is an open cover if $\bigcup_{U\in\mathcal{U}}U =X$. Moreover, we say that such a family is a decomposition if for each two subsets $U,V\in\mathcal{U}$ we have $U\cap V=\emptyset$. 
\end{definition}
Yet again, for Cantor systems, we will use these notions interchangeably. Let $$G_1\xleftarrow[]{\phi_1}G_2\xleftarrow{\phi_2}G_3\xleftarrow[]{\phi_3}...$$ be a sequence of graph covers, to which we attach a singleton graph $G_0=(\overline{0}, (\overline{0},\overline{0}))$ to the head with natural graph cover $\phi_0$. On the inverse space
\begin{equation*}
    \mathrm{G}=G_0\xleftarrow[]{\phi_0}G_1\xleftarrow[]{\phi_1}G_2\xleftarrow{\phi_2}G_3\xleftarrow[]{\phi_3}...
\end{equation*}
we define the inverse limit
\begin{equation*}
    V_{\mathrm{G}}=\left\{
    (x_0,x_1, x_2,...)\in \prod_{i=0}^\infty V_i\colon\;x_i=\phi_i(x_{i+1})\text{ for }i\in\mathbb{N}
    \right\}
\end{equation*}
equipped with product topology and the relation
\begin{equation*}
    E_{\mathrm{G}}=\left\{
    (x,y)\in V_{\mathrm{G}}\times V_{\mathrm{G}}\colon\; (x_i,y_i)\in E_i\text{ for } i\in\mathbb{N}
    \right\}
\end{equation*}

\begin{theorem}[\cite{Shimomura}, Lemma 3.5]
Let $\mathrm{G}$ be a sequence of covers. Then $V_{\mathrm{G}}$ is a compact metrizable 0-dimensional space and $E_{\mathrm{G}}$ determines a continuous map from $V_{\mathrm{G}}$ to itself.
\end{theorem}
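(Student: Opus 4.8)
The plan is to split the statement into a topological half --- recognising $V_{\mathrm{G}}$ as a closed subspace of a product of finite discrete spaces --- and a dynamical half --- checking that the relation $E_{\mathrm{G}}$ is in fact the graph of a continuous self-map. Throughout I would use the standing conventions attached to a sequence of covers in \cite{Shimomura}: each $G_i=(V_i,E_i)$ is a finite graph in which every vertex emits at least one edge, and each $\phi_i\colon G_{i+1}\to G_i$ is a cover, hence $+$directional and edge-surjective, and in particular vertex-surjective.

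For the topological half, endow each $V_i$ with the discrete topology; being finite it is compact, metrizable and $0$-dimensional. A countable product of such spaces retains all three properties (Tychonoff for compactness; a countable product of metric spaces is metrizable; the basic cylinders form a clopen basis of the product), so $\prod_{i\ge 0}V_i$ is compact metrizable $0$-dimensional. Now $V_{\mathrm{G}}=\bigcap_{i\ge 0}\{x : x_i=\phi_i(x_{i+1})\}$ is a countable intersection of clopen subsets of the product, hence closed, and a closed subspace inherits the three properties; this gives the first assertion. Nonemptiness --- so that the statement is not vacuous --- follows from vertex-surjectivity of the $\phi_i$, since an inverse limit of nonempty finite sets along surjections is nonempty.

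For the dynamical half I would first show that $E_{\mathrm{G}}$ is total and single-valued on $V_{\mathrm{G}}$. Fix $x=(x_0,x_1,\dots)\in V_{\mathrm{G}}$. Since $\phi_i$ is $+$directional, all successors of the vertex $x_{i+1}$ in $G_{i+1}$ have one and the same $\phi_i$-image, and since $x_{i+1}$ emits at least one edge this common value $y_i\in V_i$ is well defined. Applying the graph homomorphism $\phi_i$ to an edge out of $x_{i+1}$ shows $(x_i,y_i)\in E_i$ for every $i$; taking this at level $i+1$ shows $y_{i+1}$ is a successor of $x_{i+1}$, whence $\phi_i(y_{i+1})=y_i$ by the definition of $y_i$. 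Thus $y=(y_0,y_1,\dots)\in V_{\mathrm{G}}$ and $(x,y)\in E_{\mathrm{G}}$, so $E_{\mathrm{G}}$ is total. For single-valuedness, if $(x,y),(x,y')\in E_{\mathrm{G}}$ then $(x_{i+1},y_{i+1})$ and $(x_{i+1},y'_{i+1})$ both lie in $E_{i+1}$, so $+$directionality forces $\phi_i(y_{i+1})=\phi_i(y'_{i+1})$, while membership in $V_{\mathrm{G}}$ gives $y_i=\phi_i(y_{i+1})$ and $y'_i=\phi_i(y'_{i+1})$; hence $y_i=y'_i$ for all $i$. Therefore $E_{\mathrm{G}}$ is the graph of a map $F_{\mathrm{G}}\colon V_{\mathrm{G}}\to V_{\mathrm{G}}$, and since $F_{\mathrm{G}}(x)_i=y_i$ depends only on $x_{i+1}$, each coordinate of $F_{\mathrm{G}}$ --- a map between finite discrete sets precomposed with a projection --- is continuous, so $F_{\mathrm{G}}$ is continuous; equivalently, $E_{\mathrm{G}}$ is closed in the compact metric space $V_{\mathrm{G}}\times V_{\mathrm{G}}$, so the map it defines is automatically continuous. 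Edge-surjectivity of the $\phi_i$, unused so far, yields in addition that $F_{\mathrm{G}}$ is onto by the same finite-inverse-limit argument, although the statement only claims continuity.

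The step I expect to carry the real content is the single-valuedness: a vertex $x_i$ typically has several successors in $G_i$, so the value $F_{\mathrm{G}}(x)_i$ cannot be read off from coordinate $i$ alone --- it is $+$directionality at level $i+1$, working together with the inverse-limit coherence $y_i=\phi_i(y_{i+1})$, that collapses the competing choices to one. The only other point to watch is the ``no sink'' hypothesis on the graphs $G_i$, which is exactly what makes $E_{\mathrm{G}}$ total rather than a merely partial relation.
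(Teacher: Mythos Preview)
Your argument is correct and is essentially the standard one. Note, however, that the paper does not supply its own proof of this statement: it is quoted verbatim as Lemma~3.5 of \cite{Shimomura} and used as a black box, so there is no in-paper proof to compare against. Your write-up would serve as a self-contained justification in its place; the only hypotheses you invoke beyond what the present paper records explicitly are that each $V_i$ is finite and that every vertex emits an edge, both of which are indeed part of Shimomura's standing conventions and are needed exactly where you use them (metrizability/compactness of the product and totality of $E_{\mathrm{G}}$, respectively).
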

Assume that $(X,f)$ is a dynamical system and let $\mathcal{U}=(U_i)_{i=1}^n$ be an open cover of $X$. On $\mathcal{U}$ we define the following relation (denoted by $f^{\mathcal{U}}$)
\begin{equation*}
    U_i\sim U_j \iff f(U_i)\cap U_j\neq \emptyset.
\end{equation*}
One can see that the pair $(\mathcal{U}, f^{\mathcal{U}})$ forms a graph. Moreover, as one can find in \cite{Shimomura}, if $(\mathcal{C}, f)$ is a surjective Cantor system, then one can construct a refining sequence of decompositions $(\mathcal{U}_i)$ such that the dynamical system $(V_{(\mathcal{U}_i)}, E_{(\mathcal{U}_i)})$ represents the dynamics on $(\mathcal{C}, f)$.
\begin{theorem}[\cite{Shimomura}, Lemma 3.8]\label{shimomura_tower}
    Let $(\mathcal{C}, f)$ be a surjective Cantor system. Then there exists a refining sequence of decompositions $(\mathcal{U}_i)$ such that
    \begin{enumerate}
        \item the sequence $\mathrm{U}=(\mathcal{U}_1, f^{\mathcal{U}_1})\xleftarrow[]{\phi_1}(\mathcal{U}_2, f^{\mathcal{U}_2})\xleftarrow[]{\phi_1}...$ of natural graph homomorphisms is a sequence of covers,
        \item the system $(V_\mathrm{U}, E_\mathrm{U})$ is conjugate to $(X,f)$.
    \end{enumerate}
\end{theorem}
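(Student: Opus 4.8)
The plan is to build the sequence $(\mathcal{U}_i)$ by an inductive clopen-refinement procedure and then to check that the tautological identification of a point with the nested sequence of partition cells containing it is the required conjugacy. First I would fix an arbitrary finite clopen partition $\mathcal{U}_1$ of $\mathcal{C}$. Given $\mathcal{U}_i$, I would use compactness and zero-dimensionality of $\mathcal{C}$ together with continuity of $f$ to choose a finite clopen partition $\mathcal{U}_{i+1}$ that simultaneously refines $\mathcal{U}_i$, refines the pullback partition $\{f^{-1}(V)\colon V\in\mathcal{U}_i,\ f^{-1}(V)\neq\emptyset\}$, and has $\mesh(\mathcal{U}_{i+1})<2^{-i}$ — this is possible because a common refinement of finitely many finite clopen partitions is again a finite clopen partition, and every finite open cover of a compact zero-dimensional space refines to a finite clopen partition. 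The sequence $(\mathcal{U}_i)$ so obtained is refining with $\mesh(\mathcal{U}_i)\to 0$, and $\phi_i$ is the natural map taking each $U\in\mathcal{U}_{i+1}$ to the unique member of $\mathcal{U}_i$ containing it.

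The second step is to verify that every $\phi_i$ is a cover. Edge-surjectivity holds for any refining sequence: if $(V,W)\in f^{\mathcal{U}_i}$, pick $x\in V$ with $f(x)\in W$; the cells $U,U'\in\mathcal{U}_{i+1}$ through $x$ and $f(x)$ satisfy $\phi_i(U)=V$, $\phi_i(U')=W$, and $f(x)\in f(U)\cap U'$, so $(U,U')\in f^{\mathcal{U}_{i+1}}$ lies above $(V,W)$. The $+$ directional condition is exactly what the pullback refinement provides: if $(U,U'),(U,U'')\in f^{\mathcal{U}_{i+1}}$ then $f(U)$ meets both $U'$ and $U''$, but $U\subseteq f^{-1}(V)$ for some $V\in\mathcal{U}_i$ forces $f(U)\subseteq V$, hence $U',U''\subseteq V$ and $\phi_i(U')=\phi_i(U'')$. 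This gives assertion~(1).

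For assertion~(2) I would define $\pi\colon\mathcal{C}\to\prod_{i\ge 0}V_i$ by placing $\overline{0}$ in coordinate $0$ and, for $i\ge 1$, the cell of $\mathcal{U}_i$ containing $x$ in coordinate $i$; since the partitions are refining, $\pi(x)\in V_{\mathrm{U}}$. Vanishing mesh makes $\pi$ injective, local constancy of each coordinate map makes $\pi$ continuous, and surjectivity follows because for $(x_i)\in V_{\mathrm{U}}$ one has $x_{i+1}\subseteq x_i$ (from $\phi_i(x_{i+1})=x_i$), so $\bigcap_i x_i$ is a single point whose image under $\pi$ is $(x_i)$; thus $\pi$ is a homeomorphism. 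Finally, for each $x$ and each $i$ we have $f(x)\in f(x_i)\cap y_i$, where $x_i$ and $y_i$ are the cells of $\mathcal{U}_i$ through $x$ and $f(x)$; hence $(\pi(x),\pi(f(x)))\in E_{\mathrm{U}}$, and because $E_{\mathrm{U}}$ is the graph of a single-valued continuous self-map of $V_{\mathrm{U}}$ by Lemma~3.5 of \cite{Shimomura}, this says $E_{\mathrm{U}}\circ\pi=\pi\circ f$, i.e.\ $\pi$ is a conjugacy.

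The only delicate point is enforcing $+$ directionality at \emph{every} level, which is precisely why $\mathcal{U}_{i+1}$ must refine the pullback $\{f^{-1}(V)\colon V\in\mathcal{U}_i\}$ and not merely $\mathcal{U}_i$; once that is built into the induction, everything else is routine bookkeeping, with single-valuedness and continuity of the limit map outsourced to the structural lemma already quoted, and surjectivity of $f$ entering only to keep all vertex sets nonempty and to guarantee that $E_{\mathrm{U}}$ is onto.
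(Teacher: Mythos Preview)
The paper does not supply its own proof of this statement: it is quoted as Lemma~3.8 of \cite{Shimomura}, and only the construction is extracted in Corollary~\ref{shimomura_fix}. Your argument is correct and matches that extracted construction exactly --- building $\mathcal{U}_{i+1}$ as a clopen refinement of $f^{-1}(\mathcal{U}_i)\vee\mathcal{U}_i$ with mesh tending to zero, then checking that refinement of the pullback is precisely what forces $+$\,directionality, and finally identifying points with their nested cell sequences to obtain the conjugacy. There is nothing to add; your proof is essentially Shimomura's.
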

\begin{definition}
     Let $\mathcal{U}$ be a collection of subsets of a metric space $X$. We define the mesh of $\mathcal{U}$ to be the number $\mesh(\mathcal{U})=\sup\{\diam(U)\colon U\in\mathcal{U}\}$.
\end{definition}
\begin{corollary}[of the proof of \cite{Shimomura}, Lemma 3.8]\label{shimomura_fix}
Let $(\mathcal{C}, f)$ be a surjective Cantor system and $\mathcal{U}_0$ a decomposition of $\mathcal{C}$. Set
\begin{equation*}
    \mathrm{U}=(\mathcal{C}, (\mathcal{C},\mathcal{C}))\xleftarrow[]{\phi_0}(\mathcal{U}_1, f^{\mathcal{U}_1})\xleftarrow[]{\phi_1}(\mathcal{U}_2, f^{\mathcal{U}_2})\xleftarrow[]{\phi_2}...,
\end{equation*}
where each graph homomorphism $\phi_i$ is induced by the identity map on $\mathcal{C}$ and each decomposition $\mathcal{U}_{i+1}$ is a refinement of $$f^{-1}(\mathcal{U}_i)\vee \mathcal{U}_i=\{U\cap V\colon\;U\in f^{-1}(\mathcal{U}_i),\;V\in\mathcal{U}_i\}$$
 with $\mesh(\mathcal{U}_i)\to 0$. Then the system $(V_\mathrm{U}, E_\mathrm{U})$ is conjugate to $(X,f)$.
\end{corollary}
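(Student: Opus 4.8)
This corollary is really a matter of isolating which features of the decomposition sequence are used in the proof of Lemma~3.8 of \cite{Shimomura}: the bonding maps there are identity-induced refinement maps, each decomposition refines the $f$-pullback join of the previous one, and the meshes go to zero. So the plan is to verify directly that, under exactly these hypotheses, $\mathrm{U}$ is a sequence of covers and that the tautological map is a conjugacy, invoking Lemma~3.5 of \cite{Shimomura} for the fact that a sequence of covers determines a continuous self-map $F_\mathrm{U}$ of $V_\mathrm{U}$ whose graph is $E_\mathrm{U}$.

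First I would note that $\mathcal{U}_i$, being a decomposition of the compact space $\mathcal{C}$, is finite and consists of clopen sets, and that $\mathcal{U}_{i+1}$ refines $\mathcal{U}_i$; hence each $U\in\mathcal{U}_{i+1}$ sits inside a unique $\phi_i(U)\in\mathcal{U}_i$, this being the identity-induced map. That $\phi_i$ is a graph homomorphism and is edge-surjective is routine: a homomorphism because $f(U)\cap V\ne\emptyset$ yields $f(\phi_i(U))\cap\phi_i(V)\ne\emptyset$, and edge-surjective because any edge $(W,W')$ of $f^{\mathcal{U}_i}$ is witnessed by some $x\in W$ with $f(x)\in W'$, and the members of $\mathcal{U}_{i+1}$ containing $x$ and $f(x)$ form an edge of $f^{\mathcal{U}_{i+1}}$ lying over it (for $\phi_0$ this just says $f^{\mathcal{U}_1}\ne\emptyset$). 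The step I expect to be the crux, and the only place the stronger hypothesis on $\mathcal{U}_{i+1}$ is needed, is $+$~directionality: since $\mathcal{U}_{i+1}$ refines $f^{-1}(\mathcal{U}_i)\vee\mathcal{U}_i$, each $U\in\mathcal{U}_{i+1}$ is contained in a single $f^{-1}(W)$ with $W\in\mathcal{U}_i$, so $f(U)\subseteq W$; thus if $f(U)$ meets both $V$ and $V'$ then $W$ meets both $\phi_i(V)$ and $\phi_i(V')$, forcing $\phi_i(V)=W=\phi_i(V')$ by disjointness of $\mathcal{U}_i$. This makes $\mathrm{U}$ a sequence of covers, so $F_\mathrm{U}$ is defined.

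Finally I would build the conjugacy $\pi\colon\mathcal{C}\to V_\mathrm{U}$, sending $x$ to $(\mathcal{C},x_1,x_2,\dots)$ where $x_i\in\mathcal{U}_i$ is the element containing $x$; the identity $\phi_i(x_{i+1})=x_i$ is immediate, so $\pi$ is well defined, and it is continuous because each coordinate $x\mapsto x_i$ is locally constant. Injectivity follows from $\mesh(\mathcal{U}_i)\to 0$, and surjectivity from the observation that any $(x_i)\in V_\mathrm{U}$ is a nested sequence of nonempty clopen sets with $\diam(x_i)\le\mesh(\mathcal{U}_i)\to 0$, hence has a single common point which $\pi$ maps onto it; so $\pi$ is a homeomorphism. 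To check it intertwines the dynamics, write $\pi(x)=(x_i)$ and $\pi(f(x))=(y_i)$ and note that $f(x)\in f(x_i)\cap y_i$, so $(x_i,y_i)\in f^{\mathcal{U}_i}$ for all $i$, i.e. $(\pi(x),\pi(f(x)))\in E_\mathrm{U}$; as $E_\mathrm{U}$ is the graph of $F_\mathrm{U}$ this gives $F_\mathrm{U}\circ\pi=\pi\circ f$, and therefore $(V_\mathrm{U},E_\mathrm{U})$ is conjugate to $(X,f)$, as claimed.
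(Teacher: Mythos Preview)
Your proposal is correct and is exactly in the spirit of what the paper intends: the paper does not supply a separate proof for this corollary but simply records it as a consequence of the proof of \cite{Shimomura}, Lemma~3.8, and your write-up faithfully unpacks that argument by checking that identity-induced refinement maps between decompositions satisfying $\mathcal{U}_{i+1}\preceq f^{-1}(\mathcal{U}_i)\vee\mathcal{U}_i$ are covers and that the tautological map is a conjugacy. There is nothing to add or correct.
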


\section{Proof of the Theorem A}
Let $(\mathcal{C}, f)$ be an initially chosen Cantor dynamical system. In what follows, we will explain how the structure of $\mathcal{C}$ can be used to define a Gehman dendrite with "marked" branching points and associate with this structure a mixing map $F$ with desired properties.


\subsection{Setup}\label{sec:setup}
Let $\mathcal{G}$ be a Gehman dendrite, that is the topologically unique dendrite  (up to homeomorphism) whose set of endpoints is homeomorphic to the Cantor set and whose branch points are all of  Menger-Urysohn order 3 (see \cite{Charatonik, Nadler, Nikiel}  for further details). We know that, with due diligence, $\mathcal{G}$ can be constructed as a closure of a union of countably many arcs in $\mathbb{R}^2$. Let us shortly present the main idea together with a suitable notation for our further considerations. We begin with an initial point $c\in\mathbb{R}^2$, which we will call the root of $\mathcal{G}$. We attach two arcs $A_0 = [c, c_0]$, $A_1 = [c, c_1]$ in such a way that $(c, c_0], (c, c_1]$ are disjoint. Then we continue in the same way, attaching two arcs to each of the other endpoints of arcs attached in the previous step. We set $A_{00}=[c_0, c_{00}], A_{01}=[c_0, c_{01}], A_{10}=[c_1, c_{10}]$ and $A_{11}=[c_1, c_{11}]$. Then for indexing sequence at step $n\geq 2$, $\omega=(\omega_0,\ldots,\omega_{n-1})\in \{0,1\}^n$ and a corresponding arc $A_\omega=[c_{\omega{[0,n-2]}},c_{\omega}]$, where $\omega{[0,n-2]}=(\omega_0,...,\omega_{n-2})$ is the restriction of $\omega$ to the first $n-1$ digits, we attach two arcs $A_{\omega0}=[c_\omega,c_{\omega0}]$ and $A_{\omega1}=[c_\omega,c_{\omega1}]$. The indexing sequence $\omega=(\omega_0,\ldots, \omega_{n-1})$ uniquely characterizes the arc in $\mathcal{G}$ with one endpoint at $c_{\omega{[0,n-2]}}$ and second at $c_{\omega}$.

\begin{definition}
    A path $P=[c_\omega, c_{\omega\alpha}]$ for some extension $\alpha\in\{0,1\}^n$ of the binary code $\omega$ is a connected union $$[c_\omega, c_{\omega\alpha}]=[c_\omega, c_{\omega\alpha_1}]\cup...\cup[c_{\omega\alpha_1...\alpha_{n-1}}, c_{\omega\alpha}].$$
\end{definition}

All arcs indexed with sequences of length $n$ will be called level $n$. Moreover, the construction by attaching consecutive arcs and the positioning of those arcs with respect to the root of $\mathcal{G}$ provides a natural orientation of the dendrite. Thus any binary sequence characterising an arc $[c_v,c_u]\subset\mathcal{G}$ characterizes also its further endpoint $c_u$. One may view $\mathcal{G}$ as an infinite binary tree directed downwards.

We will use the notion of levels to define the metric on $\mathcal{G}$. Let $d_I\colon I\times I\to \mathbb{R}$ be a standard metric defined on an arc $I$. 
For each level $n\in\mathbb{N}$ of arcs in $\mathcal{G}$ we set the scale coefficient $\frac{1}{2^{2n}}$. 
We endow each arc $A_u$ with metric $d_I$ induced by a natural homeomorphism, multiplied by the scale coefficient, that is $\diam A_u=\frac{1}{2^{2|u|}}$, where $|u|$ is length of the indexing sequence $u$. Now, as we know that each pair of points in $\mathcal{G}$ is connected by a unique (and possibly infinite, when one of those points is an endpoint) path, we define the "taxicab" metric $d_{\mathcal{G}}\colon \mathcal{G}\times\mathcal{G}\to\mathbb{R}$ as a sum of lengths of arcs at consecutive levels composing that path. Moreover, we will say that a map $F\colon\mathcal{G}\to \mathcal{G}$ is linear on a subset $J\subset \mathcal{G}$ if there exists $a\in\mathbb{R}$ such that $d_\mathcal{G}(F(x), F(y))=a\cdot d_\mathcal{G}(x, y)$ for all $x,y\in J$. The map $F$ is (countably) piecewise linear if $\mathcal{G}=\bigcup_{J\in\mathcal{J}} J$, where $F$ is linear on each subset $J$. It is often the case that each component $J$ is an arc.

\begin{definition}
    We say that $J$ is a linearity interval for $F$ if $J$ is an arc contained in an edge $[c_\omega, c_{\omega\alpha}]$ for $\omega\in\{0,1\}^n$ and $\alpha\in\{0,1\}$, maximized in the sense of inclusion, on which the map $F$ is linear.
\end{definition}

To perform our main construction, we will also need a proper approximation of desired Cantor system $(\mathcal{C},f)$.
Following Corollary \ref{shimomura_fix}, we will construct a sequence 
of covers which will be guideline for successive approximation of $\mathcal{C}$ and $f$.
To achieve that, set $\mathcal{U}_0=\{\mathcal{C}\}$ and then for each $i\in\mathbb{N}_1$ let $\mathcal{W}_i$ be a decomposition further refining $f^{-1}(\mathcal{U}_{i-1})\vee \mathcal{U}_{i-1}$ with $\mesh(\mathcal{W}_i)< \frac{1}{i} \mesh(\mathcal{U}_{i-1})$. Define $\mathcal{U}_i$ as a refinement of $\mathcal{W}_i$, where each set $W\in\mathcal{W}_i$ is yet again divided into four distinct sets. For the sequence $(\mathcal{U}_i)_{i=0}^\infty$ define the inverse space $\mathrm{U}$ as in Corollary \ref{shimomura_fix}. By Theorem \ref{shimomura_tower}, the system $(V_{\mathrm{U}}, E_{\mathrm{U}})$ is conjugate to $(\mathcal{C}, f)$.

\par We can start transferring the structure of $\mathrm{U}$ onto the dendrite $\mathcal{G}$. Set the singleton graph to be represented by the root $c$ of $\mathcal{G}$,  which can be identified with the empty binary sequence. Let $n_i\in\mathbb{N}$ be defined inductively as $n_i=n_{i-1}+|\mathcal{U}_i|$ for each $i\in\mathbb{N}_1$ and $n_0=0$. Fix some $i$ and assume that each $U\in \mathcal{U}_i$ is identified with some binary code $\phi(U)\in \{0,1\}^{n_i}$. For each $V\in \mathcal{U}_{i+1}$ we assign a unique code $\omega_V\in \{0,1\}^{|\mathcal{U}_{i+1}|}$. Then we set the unique binary code $\phi(V)$ distinguishing $V$ as a concatenation $\phi(V):=(\phi(U),\omega_V)\in \{0,1\}^{n_{i+1}}$, where $U\in \mathcal{U}_i$ is the unique element such that $V\subset U$. Using the function $\phi$, we identify each set $U\in\mathcal{U}_i$ with a branching point $c_{\phi(U)}$ at level $n_i$.

Observe that we have introduced the structure of the inverse limit
by distinguishing a particular subset of branching points of $\mathcal{G}$ which corresponds to a particular subdendrite of $\mathcal{G}$. Simply, fix any $x\in \mathcal{C}$ and let $U_i=U_i^x\in \mathcal{U}_i$
be its representation by elements of clopen partitions, that is $\cap U_i=\{x\}$. Then the arcs $P_0=[c,c_{\phi(U_0)}]$ and $P_{i+1}=[c_{\phi(U_i)},c_{\phi(U_{i+1})}]$ for $i\geq 0$ intersect only at their endpoints, and thus $L_i=P_0\cup \ldots \cup P_i$ is a path between the root and the associated branching point at level $n_i$. Then $L_x=[c,\gend(x)]:=\overline{\bigcup_i L_i}$ is a unique path in $\mathcal{G}$ starting at $c$ and ending at an endpoint $\gend(x)$ of $\mathcal{G}$ induced by this path.

This provides a 1-1 correspondence between $\mathcal{C}$ and a subset of $\text{End}(\mathcal{G})$. Furthermore, the assignment $x\mapsto \gend(x)$
is continuous, because for every $\epsilon>0$ and any $\epsilon$-close points $x,y\in \mathcal{C}$ there exists $N=N(\epsilon)$ such that $U^x_i=U^y_i$ for $i\leq N$ and so the length of the arcs $L_x\setminus L_y$, $L_y\setminus L_x$, and consequently $d(x,y)$, is bounded by $\frac{1}{2^N}$. 
\begin{remark}
    This bound comes from the choice of the scale coefficient $\alpha_n=\frac{1}{2^{2n}}$ in the definition of the metric on $\mathcal{G}$. It implies that at each level $n$ in $\mathcal{G}$ the total length of all edges at this level is equal to $\frac{1}{2^n}$, and thus the distance $d(x,y)$ can be bounded by the length of all edges at levels $N+1$ and below, which is equal to $\frac{1}{2^N}$. One can bound $d(x,y)$ even more restrictively e.g. by the length of all paths descending from the endpoint $b_N$ of the path $[c, b_N]=L_x\cap L_y$, which is equal to $\frac{1}{2^{2N}}$.
\end{remark}

As a consequence, the set $E=\{\gend(x)\colon x\in \mathcal{C}\}$ is a closed subset of the set of endpoints of $\mathcal{G}$. Let $\mathcal{S}=\bigcup_{x\in \mathcal{C}}L_x$  be a subdendrite in $\mathcal{G}$ obtained by connecting elements of the closed set $E$ with the root $c$ and let $d_\mathcal{S}$ be the metric induced by $d_{\mathcal{G}}$. Observe that since for each $U\in \mathcal{U}_i$ there are at least $4$ elements $V_1, V_2, V_3, V_4$ in $\mathcal{U}_{i+1}$ with codes $\phi(V_i)$ extending $\phi(U)$, by results of \cite{Charatonik} we obtain that $\mathcal{S}$ is homeomorphic to $\mathcal{G}$, i.e. it is also a Gehman dendrite. Observe that, by the definition of $\mathcal{S}$, we have $\mathrm{End}(\mathcal{S})=E$. We will also use the following notation: for any $U\in \mathcal{U}_i$, by subdendrite of $\mathcal{G}$ with root $c_{\phi(U)}$ we mean the dendrite $$X_{c_{\phi(U)}}=\bigcup_{x\in U}L_x.$$

\subsection{The dynamics on $\mathcal{S}$}\label{F_map}
We will use the notation introduced in Section~\ref{sec:setup}. Start by putting $F(c)=c$. Next, for each $U\in\mathcal{U}_i$, define $F(c_{\phi(U)})=c_{\phi(V)}$, where $V\in\mathcal{U}_{i-1}$ is the unique set containing all sets $W\in\mathcal{U}_{i}$ such that $f(U)\cap W\neq \emptyset$.
 
\begin{remark}
The existence and uniqueness of the set $V$ comes from our construction 
since all bounding maps are +directional and  edge-surjective. Moreover, observe that if $U_i\in\mathcal{U}_i$ and $U_{i+1}\in\mathcal{U}_{i+1}$ is such that $U_{i+1}\subset U_i$, then for $F(c_{\phi(U_i)})=c_{\phi(V_{i-1})}$ and $F(c_{\phi(U_{i+1})})=c_{\phi(V_{i})}$ we have that the binary encoding of $V_{i-1}$ is a prefix of $V_{i}$, as each $\mathcal{U}_{i+1}$ is a family of subsets of $f^{-1}(\mathcal{U}_i)\vee \mathcal{U}_i$. Thus if $(U_i)_{i=0}^{\infty}\in V_{\mathrm{U}}$, then for $(V_i)_{i=0}^\infty$ obtained by translating back the action of $F$ on $(c_{\phi(U_i)})_{i=0}^\infty$ we have that $(V_i)_{i=0}^\infty\in V_\mathrm{U}$.
 \end{remark}

For each branching point $b\in\mathcal{S}$ outside levels $n_i$, i.e. any branching point $b$ which is contained in the path $P=[c_{\phi(U_{i-1})}, c_{\phi(U_{i})}]$ for some $U_{i-1}\in\mathcal{U}_{i-1}, U_i\in\mathcal{U}_i$, but is not endpoint of $P$, we define $F(b)=F(c_{\phi(U_{i-1})})$.

Fix any $U\in \mathcal{U}_{i}$ and let $V\in \mathcal{U}_{i-1}$ be the unique set such that $F(c_{\phi(U)})=c_{\phi(V)}$. Denote by $\mathcal{W}_{\text{ad}}(U)$ the set of all $W\in \mathcal{U}_i$ such that $\phi(V)$ is a prefix of $\phi(W)$.
Similarly, we define
$\mathcal{W}_{\text{rel}}(U)=\{W\in \mathcal{U}_{i} : f(U)\cap W\neq \emptyset\}$.  Remind that each $W\in\mathcal{U}_i$ is divided into at least four distinct sets from $\mathcal{U}_{i+1}$ and thus there exists at least one edge of the form $[c_{\phi(W)}, c_v]$, i.e. $v=(\phi(W),a)$ for some $a\in \{0,1\}$. For every $W$ pick one of such edges and denote it by $e_{W}$.

Fix any path $P=[c_{\phi(U_{i-1})}, c_{\phi(U_i)}]$ for some $U_{i-1}\in\mathcal{U}_{i-1}, U_i\in\mathcal{U}_i$
and fix any edge $e=[e_u,e_v]\subset P$.

Assume first that $v\neq c_{\phi(U_i)}$. Enumerate elements of the set $\mathcal{W}_{\text{ad}}(U_{i-1})=\{W_1,\ldots, W_k\}$. Divide the edge
$e$ onto $2k$ consecutive subintervals $I_{2j}, I_{2j+1}$ for $j\in\{1,...,k\}$. Recall that by the definition of $F$ both endpoints of 
$e$ are mapped onto the same point $F(c_{\phi({U_{i-1}})})$. Now for each $j=1,...,k$ perform a two-way linear stretch of $I_{2j},I_{2j+1}$ over the paths $P_{W_j}=[F(c_{\phi({U_{i-1}})}), c_{\phi(W_j)}]$, that is $F(I_{2j})=F(I_{2j+1})=P_{W_j}$, where on $I_{2j}$ we preserve orientation of $P_{W_j}$ and on $I_{2j+1}$ we revert it. Effectively, one can see that the image $F(e)$ covers the largest binary tree $X_{F(c_{\phi(U_{i-1})})}\cap \mathcal{S}_{n_i}$ contained in the first $n_i$ levels of $\mathcal{S}$ such that the point $F(c_{\phi(U_{i-1})})$ is its root.

Next, let us consider the case of $[b, c_{\phi(U)}]$ for some $U\in\mathcal{U}_1$. Enumerate elements of the set $\mathcal{W}_{\text{rel}}(U)=\{W_1,\ldots, W_k\}$ and recall that $F(b)=F(c_{\phi(U)})=c$. Divide the edge $e$ onto  $2k$ consecutive intervals $I_{2j}, I_{2j+1}$ for $j\in\{1,...,k\}$. Then for each $j=1,...,k$ perform a two-way linear stretch over the paths $P'_{W_j}=[c, c_{\phi(W_j)}]\cup e_{W_j}$, that is $F(I_{2j})=F(I_{2j+1})=P'_{W_j}$, where on $I_{2j}$ we preserve orientation of $P'_{W_j}$ and on $I_{2j+1}$ we revert it.

Finally, let $e\subset\mathcal{S}$ be an edge $[b, c_{\phi(U)}]$ for some $U\in\mathcal{U}_i$, where $i\geq 2$.
Like it has been done before, enumerate elements of the set $\mathcal{W}_{\text{rel}}(U)=\{W_1,\ldots, W_k\}$.
Divide the edge $e$ onto $2k+1$ consecutive subintervals $I_0\cup \{I_{2j}, I_{2j+1}\colon j=1,\ldots,k\}$. Recall that by the definition of $F$ there are sets $V_{i-2}\in\mathcal{U}_{i-2}$, $V_{i-1}\in\mathcal{U}_{i-1}$ such that $F(b)=c_{\phi(V_{i-2})}$ and $F(c_{\phi(U)})=c_{\phi(V_{i-1})}$. Set $F(I_0)$ to be a linear stretch over the path $P''=[c_{\phi(V_{i-2})}, c_{\phi(V_{i-1})}]$, preserving the orientation of $\mathcal{S}$. Then for each $j=1,...,k$ perform a two-way linear stretch over the paths $P''_{W_j}=[c_{\phi(V_{i-1})}, c_{\phi(W_j), 0}]\cup e_{W_j}$, that is $F(I_{2j})=F(I_{2j+1})=P''_{W_j}$, where on $I_{2j}$ we preserve orientation of $P''_{W_j}$ and on $I_{2j+1}$ we revert it.

Effectively, one can see that the image $F(e)$ in the last two cases covers not only every path leading to any set $W_\text{rel}$ related to $U$, but also a directly descending edge from that set, expanding down in $\mathcal{S}$ beyond the $n_i\text{-th}$ level.

Set the action on $\mathrm{End}(\mathcal{S})$ in the following way: let $x_{\text{end}}\in\mathrm{End}(\mathcal{S})$ and $(x_n)\subset \mathcal{S}\setminus \text{End}(\mathcal{S})$ be a sequence converging to $x_{\text{end}}$. Set $F(x_{\text{end}})=\lim_{n\to\infty}F(x_n)$, provided the limit exists. As we will see in next Lemma, it is always the case.

\begin{lemma}\label{F_cont}
The map $F\colon \mathcal{S}\to\mathcal{S}$ is well-defined and continuous.
\end{lemma}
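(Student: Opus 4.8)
The plan is to check well-definedness and continuity separately on the open part $\mathcal S^\circ:=\mathcal S\setminus\mathrm{End}(\mathcal S)$, a countable union of edges, and at the endpoint set $\mathrm{End}(\mathcal S)=E$; the endpoints are where the argument has real content. On $\mathcal S^\circ$ I would argue essentially by inspection: on each edge $e$ of $\mathcal S$ the map $F$ is a finite concatenation of linear stretches along paths of $\mathcal S$, hence continuous on the interior of each subinterval $I_j$; at every subdivision point the two adjacent pieces agree, because the construction always pairs an orientation-preserving stretch over a path with an orientation-reversing stretch over the \emph{same} path, so the common value at the junction is an endpoint of that path, and at the two endpoints of $e$ these values coincide with the ones prescribed for the corresponding branch points. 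Hence for every branch point $b$ of $\mathcal S$ the restrictions of $F$ to the (at most three) edges incident to $b$ agree at $b$; since these edges form a neighbourhood of $b$ in $\mathcal S$, $F$ is continuous at $b$, and it is trivially continuous elsewhere on $\mathcal S^\circ$. Moreover $F(\mathcal S^\circ)\subseteq\mathcal S$, as every path used in the construction is assembled from points $c_{\phi(\cdot)}$ and the chosen descending edges $e_W$, all lying in $\mathcal S$.

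The heart of the proof is a \emph{locality} statement that I would isolate first: for $m\ge1$ and $U\in\mathcal U_m$, writing $\mathcal S_\downarrow(U):=\{z\in\mathcal S:c_{\phi(U)}\in[c,z]\}$ for the subdendrite of $\mathcal S$ hanging below $c_{\phi(U)}$ and $V\in\mathcal U_{m-1}$ for the set with $c_{\phi(V)}=F(c_{\phi(U)})$, one has $F\bigl(\mathcal S_\downarrow(U)\cap\mathcal S^\circ\bigr)\subseteq\mathcal S_\downarrow(V)$. To prove this I would run through the cases in the definition of $F$ on edges and check that for every edge $e\subseteq\mathcal S_\downarrow(U)$ — which necessarily lies on a path $[c_{\phi(A)},c_{\phi(B)}]$ with $A\subseteq U$ — the image $F(e)$ is a union of paths issuing from $F(c_{\phi(A)})=c_{\phi(V_A)}$ and never leaving the subdendrite below $c_{\phi(V_A)}$, so $F(e)\subseteq\mathcal S_\downarrow(V_A)$; it then remains to observe that $A\subseteq U$ forces $V_A\subseteq V$. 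This last monotonicity is precisely the one recorded in the Remark following the definition of $F$ in Section~\ref{F_map} (the binary encoding of $V$ being a prefix of that of $V_A$), applied along the chain of decompositions from level $m$ down to the level of $A$, and it relies on each $\mathcal U_{i+1}$ refining $f^{-1}(\mathcal U_i)\vee\mathcal U_i$ and on the bonding maps being $+$directional and edge-surjective. Since $\mathcal S_\downarrow(V)$ is closed, one gets the same inclusion for $\overline{F(\mathcal S_\downarrow(U)\cap\mathcal S^\circ)}$.

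Granting the locality statement, the endpoint case proceeds as follows. Fix $x\in\mathcal C$ with representing sequence $(U_i)=(U^x_i)$, and for $i\ge1$ put $c_{\phi(\widetilde V_i)}=F(c_{\phi(U_i)})$ with $\widetilde V_i\in\mathcal U_{i-1}$. Since $\mathcal U_i$ refines $f^{-1}(\mathcal U_{i-1})$, the set $f(U_i)$ lies in a single member of $\mathcal U_{i-1}$, which by the definition of $F$ is exactly $\widetilde V_i$; as $f(x)\in f(U_i)$ this identifies $\widetilde V_i$ with the $\mathcal U_{i-1}$-representative of $f(x)$, so $\bigcap_i\mathcal S_\downarrow(\widetilde V_i)=\{\gend(f(x))\}$ and $\diam\mathcal S_\downarrow(\widetilde V_i)\le 2^{-2n_{i-1}}\to0$ by the scale coefficients fixed in Section~\ref{sec:setup} (cf.\ the Remark there). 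Each $\mathcal S_\downarrow(U_i)$ is a neighbourhood of $\gend(x)$ whose diameter tends to $0$, so any sequence $(z_n)\subseteq\mathcal S$ with $z_n\to\gend(x)$ eventually enters $\mathcal S_\downarrow(U_i)$ for every fixed $i$; by the locality statement — and, when $z_n\in\mathrm{End}(\mathcal S)$, by its closure form together with the continuity already established on $\mathcal S^\circ$ — we get $F(z_n)\in\mathcal S_\downarrow(\widetilde V_i)$ for all large $n$, hence $F(z_n)\to\gend(f(x))$. Taking in particular $z_n=c_{\phi(U_n)}$ shows that the limit in the definition of $F$ on $\mathrm{End}(\mathcal S)$ exists; the same estimate shows it is independent of the approximating sequence and equals $\gend(f(x))\in E\subseteq\mathcal S$, and also that $F$ is continuous at every endpoint. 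Together with continuity on $\mathcal S^\circ$, this yields that $F\colon\mathcal S\to\mathcal S$ is well-defined and continuous.

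I expect the locality statement to be the main obstacle: it is the single step requiring genuine case-by-case bookkeeping through the definition of $F$ on edges (a non-last versus the last edge of a path, the separate treatment of level $1$, the extra initial interval $I_0$ in the last case), together with the monotonicity $A\subseteq U\Rightarrow V_A\subseteq V$ resting on the structural properties of the approximating sequence $(\mathcal U_i)$. Everything else is either a direct verification (the behaviour on $\mathcal S^\circ$) or a routine shrinking-diameter/compactness argument.
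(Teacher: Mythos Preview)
Your proposal is correct and follows essentially the same route as the paper's own proof: the key step in both is the locality inclusion $F(X_{c_{\phi(U)}})\subseteq X_{F(c_{\phi(U)})}$ (which you call $F(\mathcal S_\downarrow(U)\cap\mathcal S^\circ)\subseteq\mathcal S_\downarrow(V)$ and the paper asserts with ``one can see''), followed by the shrinking-diameter argument at endpoints. Your version is simply more careful about the case analysis behind that inclusion and additionally identifies the limit explicitly as $\gend(f(x))$, anticipating the conjugacy in Lemma~\ref{conjugate_endpoint}.
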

\begin{proof}
    It is clear that $F$ is continuous on $\mathcal{S}\setminus\text{End}(\mathcal{S})$, as every edge is mapped piecewise linearly respecting the action defined on its endpoints.
    
    Moreover, one can see that the image of any sequence $(x_n)\subset \mathcal{S}\setminus \text{End}(\mathcal{S})$ converging to an endpoint $x_{\text{end}}$ is a Cauchy sequence. Observe that for each $(x_n)\to x_{\text{end}}$ and $U_i\in\mathcal{U}_i$, a neighborhood of a corresponding point in the Cantor set $\mathcal{C}(x_\text{end})\in U_i$, there is $N_i\in\mathbb{N}$ such that the sequence $(x_n)_{n=N_i}^\infty$ is contained entirely in a subdendrite $X_{c_{\phi(U_i)}}\subset\mathcal{S}$ defined by its root $c_{\phi(U_i)}$. One can see that the image $F(X_{c_{\phi(U_i)}})$ is thus contained entirely in $X_{F(c_{\phi(U_i)})}$ and $\diam(X_{F(c_{\phi(U_i)})})$ decreases exponentially as $i\to\infty$.
    
    Thus the natural extension of $F$ onto $\text{End}(\mathcal{S})$ given by $F(x_{\text{end}})=\lim_{n\to\infty}F(x_n)$ is well-defined.  Moreover, it is continuous, as yet again for any $\epsilon>0$ and any $x_{\text{end}}\in\text{End}(S)$ one can find its neighborhood $U_i\in\mathcal{U}_i$ such that $\diam(X_{F(c_{\phi(U_i)})})<\epsilon$.

\end{proof}

\begin{lemma}\label{conjugate_endpoint}
The 
subsystem $(\text{End}(\mathcal{S}), F|_{\text{End}(\mathcal{S})})$ is conjugate to $(\mathcal{C}, f)$.
\end{lemma}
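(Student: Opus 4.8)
The plan is to exhibit an explicit conjugacy between $(\text{End}(\mathcal{S}), F|_{\text{End}(\mathcal{S})})$ and the inverse limit system $(V_{\mathrm{U}}, E_{\mathrm{U}})$, which by Theorem~\ref{shimomura_tower} (applied via Corollary~\ref{shimomura_fix}) is conjugate to $(\mathcal{C}, f)$. In the setup, every $x \in \mathcal{C}$ was assigned the nested sequence $(U_i^x)_{i=0}^\infty$ with $\bigcap_i U_i^x = \{x\}$ and the endpoint $\gend(x) = \overline{\bigcup_i L_i^x}$, and this was shown to be a continuous bijection from $\mathcal{C}$ onto $E = \text{End}(\mathcal{S})$; since $\mathcal{C}$ is compact and $E$ Hausdorff, the map $h \colon x \mapsto \gend(x)$ is a homeomorphism. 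So the only thing left is to verify the intertwining relation $h \circ f = F|_{\text{End}(\mathcal{S})} \circ h$, i.e. that $F(\gend(x)) = \gend(f(x))$ for every $x \in \mathcal{C}$.

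First I would unwind what $F(\gend(x))$ is. Take a sequence $(x_n) \subset \mathcal{S}\setminus\text{End}(\mathcal{S})$ with $x_n \to \gend(x)$; by the argument in Lemma~\ref{F_cont}, for each $i$ there is $N_i$ so that $x_n \in X_{c_{\phi(U_i^x)}}$ for $n \geq N_i$, hence $F(x_n) \in X_{F(c_{\phi(U_i^x)})}$ for $n \geq N_i$, and $F(\gend(x)) = \lim_n F(x_n)$ lies in $\bigcap_i X_{F(c_{\phi(U_i^x)})}$ whose diameters shrink to $0$. Thus $F(\gend(x))$ is the unique endpoint determined by the descending sequence of subdendrites with roots $F(c_{\phi(U_i^x)})$. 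By the definition of $F$ on branching points, $F(c_{\phi(U_i^x)}) = c_{\phi(V_{i-1})}$ where $V_{i-1} \in \mathcal{U}_{i-1}$ is the unique set containing every $W \in \mathcal{U}_i$ with $f(U_i^x) \cap W \neq \emptyset$; and the Remark following that definition records that $(V_{i})_{i}$ is exactly the sequence obtained by translating $(U_i^x)_i$ through the graph cover dynamics, i.e. $(V_i)_i \in V_{\mathrm{U}}$ and it represents $E_{\mathrm{U}}((U_i^x)_i)$.

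Next I would match $(V_i)_i$ with $f(x)$. Here I use that the decompositions were built so that each $\mathcal{U}_{i+1}$ refines $f^{-1}(\mathcal{U}_i) \vee \mathcal{U}_i$ (Corollary~\ref{shimomura_fix}): this forces $f(U_{i+1}^x) \subset$ some member of $\mathcal{U}_i$, and in fact the unique $W \in \mathcal{U}_i$ with $f(x) \in W$ satisfies $f(U_{i+1}^x)\cap W \neq\emptyset$, so that $W \subset V_i$; consequently $V_i = U_i^{f(x)}$ for all $i$, because $\bigcap_i V_i$ must then contain $f(x)$ and the $V_i$ form a nested sequence of clopen sets with mesh going to $0$. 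Therefore the descending subdendrites with roots $c_{\phi(V_i)}$ are exactly the ones defining $\gend(f(x))$, giving $F(\gend(x)) = \gend(f(x))$ as required. Finally I would note that surjectivity of $F|_{\text{End}(\mathcal{S})}$ follows from surjectivity of $f$ together with the bijection $h$, so $(\text{End}(\mathcal{S}), F|_{\text{End}(\mathcal{S})})$ is conjugate to $(\mathcal{C}, f)$ via $h$.

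The main obstacle is the bookkeeping in the previous paragraph: one has to be careful that "translating back the action of $F$" on the branching-point sequence really does coincide with applying $f$ at the level of $\mathcal{C}$, and that the refinement condition $\mathcal{U}_{i+1} \prec f^{-1}(\mathcal{U}_i)\vee\mathcal{U}_i$ is what makes the two clopen sequences $(V_i)_i$ and $(U_i^{f(x)})_i$ literally equal rather than merely cofinal/interleaved. Everything else — the homeomorphism $h$ and the limit computation of $F$ on endpoints — is already essentially contained in the setup and in Lemma~\ref{F_cont}.
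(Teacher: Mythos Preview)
Your proof is correct and follows essentially the same approach as the paper: both establish the conjugacy via the assignment $x \mapsto \gend(x)$ (the paper factors this explicitly as a composition through the inverse limit $V_{\mathrm{U}}$) and verify the intertwining relation using the refinement condition $\mathcal{U}_{i+1} \prec f^{-1}(\mathcal{U}_i)\vee\mathcal{U}_i$. The paper carries out the key identification $(V_i)_i=(U_i^{f(x)})_i$ by a short contradiction argument at the level of inverse sequences whereas you argue it directly from the refinement, but the content is the same.
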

\begin{proof}
Observe that we have the following three bijections: $f_1\colon\mathcal{C}\to V_\mathrm{U}$, which identifies every point $c$ with a unique inverse sequence $(U_i)_{i=1}^\infty$ such that $\bigcap_{i=1}^\infty U_i=c$, $f_2\colon V_\mathrm{U}\to\mathcal{G}$, which identifies these inverse sequences $(U_i)_{i=1}^\infty$ with sequences of branching points $(c_{\phi(U_i)})_{i=1}^\infty$ and $f_3\colon \mathcal{G}\to \mathrm{End}(\mathcal{G})$ identifying these sequences with their limits $\lim_{i\to\infty} c_{\phi(U_i)}=\mathrm{end}(c)$.

We claim that the two systems are conjugated via a composition of these three maps. It is easy to see that the maps $f_1, f_3$ are conjugacies. It remains to show that the map $f_2$ is indeed a conjugacy map.

To see that, remind that the map $F\colon\mathcal{G}\to\mathcal{G}$ puts $F(c_{\phi(U)})=c_{\phi(V)}$ if and only if for the set $U\in\mathcal{U}_i$ there exists $W\in \mathcal{U}_i$ such that $f(U)\cap W\neq \emptyset$ and $W\subset V\in\mathcal{U}_{i-1}$. Thus, by the definition of the inverse space $\mathrm{U}$, if for two inverse sequences $(U_i)_{i=1}^\infty, (V_i)_{i=1}^\infty\in V_\mathrm{U}$ we have that $f(U_i)\cap V_i\neq\emptyset$, then $F(c_{\phi(U_i)})=c_{\phi(V_{i-1})}$ for $i\in\mathbb{N}$ and consequently $F(c_{\phi(U_{i})})_{i=1}^\infty=(c_{\phi(V_i)})_{i=1}^\infty$.

On the other hand, if we have that $F(c_{\phi(U_{i})})_{i=1}^\infty=(c_{\phi(V_i)})_{i=1}^\infty$, then $F(c_{\phi(U_i)})=c_{\phi(V_{i-1})}$. Consequently, in each $V_{i-1}\in\mathcal{U}_{i-1}$ there exists a set $W_i\in\mathcal{U}_i$ such that $f(U_i)\cap W_i\neq\emptyset$. Assume that $V_i\neq W_i$ for some $i\in\mathbb{N}$. Then, by the definition of the inverse space $\mathrm{U}$, $V_k\neq W_k$ for all $k\geq i$ and thus the sequence $(V_i)_{i=1}^\infty\neq (W_i)_{i=1}^\infty$. Since we have that $F(c_{\phi(U_k)})=c_{\phi(V_{k-1})}$, then by the definition of the inverse space $\mathrm{U}$ we have that $f(U_k)\subset V_{k-1}$. Thus, $f(U_k)\cap W_k\neq\emptyset$, $f(U_k)\subset V_{k-1}$ and $V_{k-1}\neq W_{k-1}$ for all $k>i$, a contradiction.
\end{proof}

\begin{lemma}\label{eventually_cover}
    The map $F$ is pure mixing.
\end{lemma}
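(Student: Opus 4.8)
The plan is to establish two things separately: first that $F$ is topologically mixing on $\mathcal{S}$, and then that $F$ is not exact, which together give pure mixing.

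\textbf{Mixing.} The key mechanism built into the construction is that the image of \emph{every} edge $e=[c_{\phi(U)},c_v]$ of $\mathcal{S}$ covers, in one step, a whole descending subtree of $\mathcal{S}$ down through level $n_i$ and, in the last two cases, even one edge beyond level $n_i$. First I would record the precise ``expansion lemma'': for any edge $e$ contained in the subdendrite $X_{c_{\phi(U)}}$ with $U\in\mathcal{U}_i$ and $i$ large, $F(e)$ contains $X_{c_{\phi(W)}}\cap \mathcal{S}_{n_i}$ together with the marked edge $e_W$ for every $W\in\mathcal{W}_{\mathrm{rel}}(U)$; iterating, $F^m(e)$ contains $X_{c_{\phi(W')}}\cap \mathcal{S}_{n_{i-m}}$ for every $W'$ reachable from $U$ in $m$ steps of the relation $f^{\mathcal{U}_i}$. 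Since $(\mathcal{C},f)$ is surjective, the graph covers $(\mathcal{U}_i,f^{\mathcal{U}_i})$ are edge-surjective, so after enough steps the set of reachable vertices is \emph{all} of $\mathcal{U}_{i-m}$; in particular, once $m$ is large enough that $i-m$ drops to any fixed level $\ell$, $F^m(e)\supset \mathcal{S}_{n_\ell}$, the entire $n_\ell$-level truncation of $\mathcal{S}$. I would phrase this as: for every edge $e$ and every $\ell$ there is $M=M(e,\ell)$ with $F^m(e)\supset \mathcal{S}_{n_\ell}$ for all $m\ge M$ (the ``for all $m\ge M$'' part uses that one can also absorb extra steps by staying inside a subtree and re-expanding, since every edge keeps expanding). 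Given this, fix nonempty open $U,V\subset\mathcal{S}$: $U$ contains a subarc of some edge, hence contains a subarc $e'$ of an edge which itself, after finitely many applications of $F$ restricted to its interior, expands to contain a full edge $e$; choose $\ell$ so large that $\mathcal{S}_{n_\ell}$ meets $V$ (it meets every nonempty open set since $\bigcup_\ell \mathcal{S}_{n_\ell}$ is dense in $\mathcal{S}$); then for all sufficiently large $n$, $F^n(U)\supset F^{n-k}(e)\supset \mathcal{S}_{n_\ell}$, so $F^n(U)\cap V\ne\emptyset$, i.e. $F^{-n}(V)\cap U\ne\emptyset$. This is topological mixing. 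The main obstacle here is the bookkeeping ensuring the ``for all large $n$'' uniformity rather than just ``some $n$''; the way around it is the observation that the interior of any edge already maps onto an edge after one step (so one never ``runs out'' of preimages), letting us pad the iterate count freely.

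\textbf{Not exact.} Here I would exploit the structural asymmetry of $F$: the construction never maps anything ``up'' past the root in one big jump — more precisely, $F$ maps each subdendrite $X_{c_{\phi(U)}}$ with $U\in\mathcal{U}_i$ into the subdendrite $X_{F(c_{\phi(U)})}$ rooted at level $n_{i-1}$, plus at most one extra edge beyond level $n_i$. So consider an edge $e=[c_{\phi(U)},c_v]$ with $U\in\mathcal{U}_i$ and $v=(\phi(U),a)$ lying strictly below level $n_i$, and take a small subarc $J$ in the interior of the \emph{lower half} of $e$. Tracking the image: $F(J)$ is a subarc of one of the stretch images $P''_{W_j}$, which reaches only down to the marked edge $e_{W_j}$ at level $n_i{+}1$, and crucially $F$ applied again moves it up one macro-level. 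I would argue that there is a fixed edge of $\mathcal{S}$ — say an edge $e_0$ very deep, well below level $n_i$ for the specific $i$ attached to $J$ — which is never fully covered by any $F^n(J)$, because each application of $F$ either keeps the image inside a subtree whose root climbs toward $c$ or pushes it down by at most one level past the current macro-level $n_i$, so the image can never reach arbitrarily deep edges sitting below where the relevant dynamics lives. Concretely, pick $U\in\mathcal{U}_1$ and the edge $e_0=[c_{\phi(U)},c_{v_0}]$ immediately below it but choose $J$ inside an edge at a macro-level $n_i$ with $i\ge 2$: one checks $F^n(J)$ stays within $\bigcup_{W}X_{c_{\phi(W)}}$ for $W$'s at levels $\le n_{i-1}$ (plus boundary edges), never exhausting a fixed deep edge; hence $F^n(J)\ne\mathcal{S}$ for every $n$ and $F$ is not exact. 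The subtle point — and the second main obstacle — is that $F^n(J)$ \emph{does} eventually grow to cover $\mathcal{S}_{n_\ell}$ for every $\ell$ (that is mixing!), so ``not exact'' cannot come from a bounded region; it must come from the fact that at the moment $F^n(J)$ covers $\mathcal{S}_{n_\ell}$, it simultaneously fails to cover the edges \emph{hanging immediately below} level $n_\ell$, because the one-step expansion reaches only one level past the current macro-level $n_i$ and the macro-level of the image decreases with each iterate. I would make this quantitative: if $J$ starts at macro-level $n_i$, then $F^n(J)\subset \mathcal{S}_{n_{i-n}+1}$ for $n\le i$ (image confined to the truncation one level below macro-level $n_{i-n}$), and for $n>i$ it re-expands but only from the root, so at stage $n$ it still omits most edges below level $n_{i}+1$; letting $i$ vary over the initial arc we never get all of $\mathcal{S}$ from a single $J$ — in fact it suffices to exhibit one point of $\mathcal{S}$ (an interior point of a sufficiently deep edge) that lies in no $F^n(J)$, and the level-tracking inequality supplies it. Assembling: $F$ is mixing but not exact, hence pure mixing. \qed
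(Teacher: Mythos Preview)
Your mixing argument is in the right spirit but rests on an incorrect ``expansion lemma'': it is \emph{not} true that for an arbitrary edge $e$ in $X_{c_{\phi(U)}}$ the image $F(e)$ involves $\mathcal{W}_{\mathrm{rel}}(U)$. Only the \emph{last} edge $[b,c_{\phi(U)}]$ of a path $[c_{\phi(U_{i-1})},c_{\phi(U)}]$ is stretched over the paths to the $W\in\mathcal{W}_{\mathrm{rel}}(U)$; all earlier edges use $\mathcal{W}_{\mathrm{ad}}(U_{i-1})$ and cover only a subtree down to level $n_{i-1}$ (not $n_i$). Your subsequent claim that edge-surjectivity of the bonding maps forces ``the set of reachable vertices'' to be all of $\mathcal{U}_{i-m}$ is also unjustified: edge-surjectivity concerns the bonding maps between successive cover graphs, not connectivity of any single graph $(\mathcal{U}_i,f^{\mathcal{U}_i})$. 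The paper sidesteps all this bookkeeping by a simpler mechanism: every linearity interval has stretch factor $\lambda>\tfrac{5}{4}$, so any open subarc eventually contains a branching point; every branching point reaches the root $c$ in finitely many iterates; $F(c)=c$, so the image then grows around $c$ until it contains a full level-1 edge $e_0$; and from $e_0$ an explicit ``flooding'' argument shows $F^k(e_0)\supset\mathcal{S}_{n_\ell}$ with $\ell\to\infty$ as $k\to\infty$, monotonically. This gives the ``for all large $n$'' uniformity for free.

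The non-exactness argument has a genuine gap that cannot be repaired along the lines you propose. You are trying to exhibit an \emph{interior point of an edge} lying in no $F^n(J)$, but your own mixing argument (once corrected) shows this is impossible: for any fixed edge $e'$ and any open $J$, $F^n(J)\supset e'$ for all large $n$. The level-tracking inequality you sketch, $F^n(J)\subset\mathcal{S}_{n_{i-n}+1}$, does not produce a single fixed uncovered point---the ``missed'' edges move with $n$. The correct obstruction is the set $\mathrm{End}(\mathcal{S})$: because every edge is mapped piecewise linearly onto a \emph{finite} union of paths, $F^n(e_0)$ is contained in finitely many levels of $\mathcal{S}$ for every $n$, hence $F^n(e_0)\cap\mathrm{End}(\mathcal{S})=\emptyset$ for all $n$. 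That one line is the paper's entire proof of non-exactness, and it is the point you are missing.
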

\begin{proof}
    To see that, fix some open interval $I$ contained in an edge of $\mathcal{S}$.
    One can see that $F$ stretches every edge piecewise linearly with each linear stretch having the stretching coefficient $\lambda >\frac{5}{4}$. This comes from the following combination of facts:
    \begin{enumerate}
        \item the definition of the metric $d_\mathcal{S}$ and the $\alpha$ coefficients,
        \item the construction of the sequence of refinements $(\mathcal{U}_i)$,
        \item the definition of the action of $F$ on the edges.
    \end{enumerate}
    First, observe that the image of both edges from the first level covers multiple paths composed of edges from at least levels 1 and 2, and the lengths of those paths is at least $1+\frac{1}{4}$ of the length of any edge from level 1. Then observe that the image of every other edge covers multiple times an edge located upper in the dendrite and the ratio between lengths of edges from two consecutive levels is equal to $4:1$.

    Let $J_1, J_2$ be two adjacent linearity intervals for $F$. If we have that $I\cap J_1\neq\emptyset, I\cap J_2\neq \emptyset$, then $J_1
    \cap J_2=z\in I$ and $F(z)$ is a branching point. If $I\subset J_1$, then by the fact that $J_1$ is stretched linearly with $\lambda > 1$, there exists $k\in\mathbb{N}$ such that $F^k(I)$ will contain a common endpoint of two linearity intervals. Thus there exists $k_1\in\mathbb{N}$ such that $F^{k_1}(I)$ will contain a branching point $b$. By the definition of $F$, the image $F(b)$ is a branching point $c_{\phi(U)}$ for some $U\in\mathcal{U}_i$ and thus $F^{i+1}(b)=c$. If the image of $I$ after $k_1+i+1$ iterations of $F$ does not contain a whole edge from level 1, then by the fact that $F(c)=c$ and $\lambda>1$ there exists $k_2\in\mathbb{N}$ such that for $F^{k_1+k_2+i+1}(I)$ this is true. Without loss of generality assume that the image $F^{k_1+k_2+i+1}(I)$ contains the edge $e_0=[c, c_0]$.

    Now observe that the image $F(e_0)$ covers every edge from levels 1 to $n_1$, where $n_1$ was defined to be the cardinality of $\mathcal{U}_1$. Thus the second iteration $F^2(e_0)$ will cover in addition at least one edge descending directly from each branching point $c_{\phi(U_1)}$ for every $U_1\in\mathcal{U}_1$. These edges under the action of $F$ exhibit a similar behavior as $e_0$, resulting with $F^3(e_0)$ covering every edge from levels 1 to $n_1+n_2$. Now in the fourth iteration the image $F^4(e_0)$ covers at least one edge descending directly from each branching point of the form $c_{\phi(U_2)}$, which results with $F^5(e_0)$ covering every edge from levels 1 to $n_1+n_2+n_3$. This ``flooding''` continues indefinitely, eventually covering every edge in $\mathcal{S}$.

    The immediate result is that $F$ is a mixing transformation of $\mathcal{S}$. On the other hand one can see that for any $n\in\mathbb{N}$ we have $F^n(e_0)\cap \text{End}(\mathcal{S})=\emptyset$, completing the proof.
\end{proof}

\begin{proof}[\textbf{Proof of Theorem A}]
Fix a Cantor dynamical system $(\mathcal{C},f)$.
Let $\mathcal{S}$ be the Gehman dendrite induced by $\mathcal{C}$ as presented in Section \ref{sec:setup} and let $F$ be the map induced on $\mathcal{S}$ by 
$(\mathcal{C},f)$. By Lemma \ref{F_cont} $F$ is continuous and by the construction $\text{End}(\mathcal{S})$ is an invariant subset in the dynamical system $(\mathcal{S},F)$.

By Lemma \ref{conjugate_endpoint}
$(\text{End}(\mathcal{S}), F|_{\text{End}(\mathcal{S})})$ is conjugate to the initially chosen Cantor system $(\mathcal{C}, f)$. 
Finally, by Lemma \ref{eventually_cover}, dynamical system $(\mathcal{S}, F)$ is pure mixing. The proof is complete.
\end{proof}

\section{Proof of Theorem B}
The map $F\colon \mathcal{S} \to\mathcal{S}$ constructed in Section \ref{F_map} can be modified to an exact map of the Gehman dendrite. The change we will introduce is based on the modification of the action on the first level of edges in $\mathcal{S}$. To achieve that, we will use the following method to derive surjective maps from more general mappings onto the space of subsets of the dendrite. 
\begin{definition}
    Let $X, Y$ be metric spaces. We say that $F\colon X\to 2^Y$ is an upper semicontinuous function if for every $x\in X$ and open $V\subset Y$ such that $F(x)\subset V$ there exists an open neighborhood $x\in U\subset X$ such that $F(U)\subset V$.
\end{definition}
\begin{theorem}[\cite{Nadler}, Theorem 7.4]\label{GMT}
Let $X, Y$ be compact metric spaces and $F_n\colon X\to 2^Y$ be a countable sequence of upper semicontinuous functions satisfying the following conditions:
\begin{enumerate}
    \item $F^{n+1}(x)\subset F_{n}(x)$ for all $n\in\mathbb{N}$ and $x\in X$,
    \item $\lim_{n\to\infty}\diam(F_{n}(x))=0$ for all $x\in X$,
    \item $F_n(X)=Y$ for all $n\in\mathbb{N}$.
\end{enumerate}
Then the map $f\colon X\to Y$ defined by $f(x)=\bigcap_{n\in\mathbb{N}} F_{n}(x)$ for every $x\in X$ is continuous and onto.
\end{theorem}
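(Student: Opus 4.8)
\emph{The plan} is to verify in turn that $f$ is well-defined as a point-valued map, that it is continuous, and that it is surjective, treating the three listed conditions on the $F_n$ as the three driving ingredients. Throughout I take $2^Y$ to denote the nonempty compact subsets of the compact metric space $Y$, so that each $F_n(x)$ is a nonempty compact set, and I fix a metric $d$ on $Y$.

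First I would establish that $f(x)=\bigcap_{n}F_n(x)$ is a single point. Fixing $x$, condition (1) makes $(F_n(x))_n$ a decreasing sequence of nonempty compact sets, so by the finite intersection property its intersection is nonempty. Condition (2) forces $\diam\bigl(\bigcap_n F_n(x)\bigr)\le\inf_n\diam(F_n(x))=0$, so the intersection is a singleton; this is exactly the point $f(x)$. In particular $f(x)\in F_n(x)$ for every $n$, a fact I will reuse repeatedly, and the nesting gives $f(x')\in F_n(x')$ for every $x'$ and every $n$.

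For continuity, fix $x$ and $\epsilon>0$. Using condition (2), choose $N$ with $\diam(F_N(x))<\epsilon$; since $f(x)\in F_N(x)$, this yields $F_N(x)\subset B(f(x),\epsilon)=:V$, where $B$ denotes the open ball. Now I would invoke upper semicontinuity of $F_N$ at $x$: as $F_N(x)\subset V$ with $V$ open, there is a neighborhood $U$ of $x$ with $F_N(U)\subset V$. Finally, for any $x'\in U$ the observation above gives $f(x')\in F_N(x')\subset V$, i.e. $d(f(x'),f(x))<\epsilon$, proving continuity at $x$.

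The surjectivity is where I expect the main difficulty, as it is the only place that genuinely mixes all three hypotheses with a limiting argument. Fix $y\in Y$. By condition (3), for each $n$ there is $x_n\in X$ with $y\in F_n(x_n)$; compactness of $X$ lets me pass to a subsequence $x_{n_k}\to x$. I claim $f(x)=y$, for which it suffices to show $y\in F_m(x)$ for every fixed $m$. For large $k$ we have $n_k\ge m$, so by the nesting $y\in F_{n_k}(x_{n_k})\subset F_m(x_{n_k})$. Suppose toward a contradiction that $y\notin F_m(x)$; since $F_m(x)$ is compact, $\delta:=d(y,F_m(x))>0$, and the open $\delta$-neighborhood $V$ of $F_m(x)$ excludes $y$. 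Upper semicontinuity of $F_m$ at $x$ then supplies a neighborhood $U$ of $x$ with $F_m(U)\subset V$; but $x_{n_k}\in U$ for large $k$ forces $y\in F_m(x_{n_k})\subset V$, contradicting $y\notin V$. Hence $y\in F_m(x)$ for all $m$, so $f(x)=\bigcap_m F_m(x)=\{y\}$ and $f$ is onto. The delicate point throughout this last step is the interplay between the monotonicity, used to transfer $y\in F_{n_k}(x_{n_k})$ up to the fixed level $m$, and upper semicontinuity, used to pass the membership through the limit $x_{n_k}\to x$; arranging these two to cooperate at a single fixed index $m$ is the crux of the argument.
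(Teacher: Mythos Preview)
The paper does not supply its own proof of this statement; it is quoted verbatim from Nadler's \emph{Continuum Theory} and used as a black box in Section~4. Your argument is correct and is the standard one: well-definedness follows from Cantor's intersection theorem together with condition~(2), continuity from upper semicontinuity of a single $F_N$ at a level where the diameter is already below~$\epsilon$, and surjectivity from a compactness--subsequence argument in which monotonicity pushes the membership $y\in F_{n_k}(x_{n_k})$ up to a fixed index $m$ and upper semicontinuity (equivalently, the closed-graph property for compact-valued u.s.c.\ maps) passes it through the limit $x_{n_k}\to x$. There is nothing to compare against in the paper itself.
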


\subsection{Surjective mapping of the first level of edges}\label{f_map}
We will define inductively a sequence of upper semicontinuous functions $F_n\colon [c, c_0]\cup[c, c_1]\to 2^\mathcal{S}$ and derive $f$ from that sequence using Theorem \ref{GMT}. The action $f$ will be defined analogously on both connected components of $\mathcal{S}\setminus\{c\}$. Thus, without the loss of generality, we will focus on the edge $[c, c_0]$. All of the presented reasoning below applies directly to the case $[c, c_1]$.

\begin{itemize}
\item $n=0$. Set $F_0(x)=\mathcal{S}$ for each $x\in[c, c_0]$.

\item $n=1$. Let us start by dividing the edge $[c, c_0]$ into eight subintervals of equal length, meaning that we set cut points $k_1, k_2, k_3$ in $\frac{1}{8}, \frac{1}{4}, \frac{3}{8}$ of the length of $[c, c_0]$, a cut point $i$ in $\frac{1}{2}$ of the length of $[c, c_0]$ and cut points $j_1, j_2, j_3$ in $\frac{5}{8}, \frac{3}{4}, \frac{7}{8}$ of the length of $[c, c_0]$. Set $F_1(x)=X_{c_0}$ for each $x\in[k_1, k_3]$ and $F_1(x)=X_{c_1}$ for each $x\in[j_1, j_3]$. Then on the  rest of $[c,c_0]$ we define $F_1$ as a linear map in the following way. We stretch linearly $F_1[c, k_1)=[c, c_0)=F_1(k_3, i]$ and $F_1[i, j_1)=[c, c_1)=F_1(j_3, c_0]$, preserving the orientation on $[c, k_1), [i, j_1)$ and reversing it on $(k_3, i], (j_3, c_0]$.
\end{itemize}
From now on, we will modify the map $F_{n}$ on those intervals, which have been transformed into dendrites, leaving those stretched over edges intact.
\begin{itemize}
\item $n+1$. Let $I=[i_1, i_2]$ be an interval (maximized in terms of inclusion) such that $F_n(I)=X_{c_{\omega_1...\omega_n}}$ for some branching point $c_{\omega_1...\omega_n}\in\mathcal{S}$. We also assume that there is $\varepsilon>0$ such that $F_n$ on $[i_1-\varepsilon,i_1)$ and $(i_2,i_2+\varepsilon]$ is a linear map, and $\lim_{x\to i_1^+}F_n(x)=\lim_{x\to i_1^-}F_n(x)=c_{\omega_1...\omega_n}$.  In short, assume that the modification from $n$-th step has been completed.

Let us start by dividing $I$ into eight subintervals of equal length, setting cutpoints $l_1, l_2, l_3, j, m_1, m_2, m_3$ in the same manner as we did before. If both edges $[c_{\omega_1...\omega_n}, c_{\omega_1...\omega_n 0}], [c_{\omega_1...\omega_n}, c_{\omega_1...\omega_n 1}]$ are in $\mathcal{S}$, set $F_{n+1}(x)=X_{c_{\omega_1...\omega_n 0}}$ for $x\in [l_1, l_3]$ and $F_{n+1}(x)=X_{c_{\omega_1...\omega_n 1}}$ for $x\in [m_1, m_3]$. Then stretch linearly $F_{n+1}[i_1, l_1)=[c_{\omega_1...\omega_n}, c_{\omega_1...\omega_n 0})=F_{n+1}(l_3, j]$ and $F_{n+1}[j, m_1)=[c_{\omega_1...\omega_n}, c_{\omega_1...\omega_n 1})=F_{n+1}(m_3, i_2]$. If $[c_{\omega_1...\omega_n}, c_{\omega_1...\omega_n a}]$ is the only one edge attached to $c_{\omega_1...\omega_n}$, we set the image for both subintervals in the same way, but over the same dendrite $X_{c_{\omega_1...\omega_n a}}\cup[c_{\omega_1...\omega_n}, c_{\omega_1...\omega_n a}]$.
\end{itemize}

\begin{lemma}
For each $n\in\mathbb{N}$ the function $F_n$ is upper semicontinuous.
\end{lemma}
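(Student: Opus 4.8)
The plan is to verify upper semicontinuity of each $F_n$ by induction on $n$, checking the condition pointwise and distinguishing the three types of points: points in the interior of a ``stretched'' subinterval where $F_n$ is a (single-valued) linear map; points in the interior of a ``dendrite'' subinterval where $F_n$ is constantly equal to some $X_{c_\omega}$ (again single-valued), and hence trivially u.s.c.; and the finitely many cut points, where the one-sided limits of $F_n$ agree and equal the branching point $c_\omega$ which is the root of the adjacent subdendrite. The base cases $n=0$ (where $F_0\equiv\mathcal{S}$ is globally constant, hence u.s.c.) and $n=1$ need to be checked directly, and then the inductive step uses that the modification at step $n+1$ only touches intervals on which $F_n$ was constantly equal to some $X_{c_\omega}$, replacing that constant value by images that are all contained in $X_{c_\omega}$; so u.s.c. on the untouched part is inherited, and on the touched part the same local analysis as in the base case applies.

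First I would record the elementary observation that if $F\colon X\to 2^Y$ is single-valued and continuous as a map $X\to Y$ on an open set, then it is u.s.c. there; in particular $F_n$ is automatically u.s.c. at every point lying in the open interior of a linearity subinterval or of a subinterval on which $F_n$ is constant. This reduces the whole verification to the cut points $k_1,k_2,k_3,i,j_1,j_2,j_3$ (and their analogues $l_i,j,m_i$ at deeper levels), plus the endpoints $c$ and $c_0$ of the original edge. At a cut point $p$ the value $F_n(p)$ is either a single point of $\mathcal{S}$ or a subdendrite $X_{c_\omega}$; for u.s.c. I must show that for every open $V\supset F_n(p)$ there is a neighborhood $U\ni p$ with $F_n(U)\subset V$. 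Since $F_n$ restricted to each adjacent subinterval is continuous (either linear or constant), the one-sided images $F_n([p,p+\delta))$ and $F_n((p-\delta,p])$ shrink to the one-sided limits, which by construction coincide with $F_n(p)$ (this is exactly the matching condition $\lim_{x\to i_1^+}F_n(x)=\lim_{x\to i_1^-}F_n(x)=c_{\omega_1\dots\omega_n}$ built into the recursion, together with the fact that $X_{c_\omega}$ is the closure of the paths emanating from $c_\omega$, so small neighborhoods of $c_\omega$ in $\mathcal{S}$ are eventually inside any $V\supset X_{c_\omega}$ — more precisely, a short initial segment of a stretched edge has image a short initial segment of $[c_\omega,c_{\omega a})$, which is contained in any open $V$ containing $X_{c_\omega}\supset[c_\omega,c_{\omega a}]$).

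The inductive step itself is essentially bookkeeping: the interval $I=[i_1,i_2]$ modified at step $n+1$ is one on which $F_n\equiv X_{c_{\omega_1\dots\omega_n}}$; after modification $F_{n+1}(I)\subset X_{c_{\omega_1\dots\omega_n}}$ still, and the new cut points $l_1,l_2,l_3,j,m_1,m_2,m_3$ satisfy the same matching conditions one level deeper (the one-sided limits at $l_1$ equal $c_{\omega_1\dots\omega_n}$, at $l_3$ and at $j$ also $c_{\omega_1\dots\omega_n}$, etc.), so the same local argument applies verbatim. At the old endpoints $i_1,i_2$ of $I$ one must additionally check compatibility between the already-fixed linear behavior of $F_n=F_{n+1}$ just outside $I$ and the new behavior just inside $I$: but $\lim_{x\to i_1^+}F_{n+1}(x)=c_{\omega_1\dots\omega_n}=\lim_{x\to i_1^-}F_n(x)$ by the stated hypothesis of the recursion, so u.s.c. persists at $i_1$ (and symmetrically at $i_2$). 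I do not expect any genuine obstacle here; the only mildly delicate point is making the ``short initial segment of a stretched edge lies inside any open set containing the target subdendrite'' claim precise, which follows from continuity of the natural homeomorphism parametrizing $[c_\omega,c_{\omega a}]$ and from the fact that $X_{c_\omega}$ is a neighborhood-basis-friendly closed subdendrite. Finally I would remark that this lemma, together with conditions (1)--(3) of Theorem~\ref{GMT} (nestedness $F_{n+1}(x)\subset F_n(x)$ and $\diam(F_n(x))\to 0$, both visible from the construction, and surjectivity $F_n([c,c_0]\cup[c,c_1])=\mathcal{S}$), yields the continuous surjection $f$ used in Theorem B.
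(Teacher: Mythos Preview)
Your proposal is correct and takes essentially the same approach as the paper's proof: a case analysis distinguishing interior points of linear-stretch subintervals (where $F_n$ is single-valued continuous), interior points of constant-dendrite subintervals (where $F_n$ is constant), and the cut points where the one-sided limit from the linear side matches the root of the adjacent subdendrite. The paper does not structure this as an explicit induction, but the content is identical; note only the harmless index slip that the one-sided limits at $l_1$ and $l_3$ equal $c_{\omega_1\dots\omega_n 0}$ (the root of $X_{c_{\omega_1\dots\omega_n 0}}$), not $c_{\omega_1\dots\omega_n}$.
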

\begin{proof}
It is easy to see that the linear stretches of $F_n$ are continuous. If $x$ is in the interior of a subinterval of $[c,c_0]$ which is mapped by $F_n$ into a subdendrite $X_{\text{sub}}$ in $\mathcal{S}$, then one can find a neighborhood $x\in U\subset [c,c_0]$ such that $F_n(U)=X_{\text{sub}}$. Assume that $x$ is on the boundary of such subinterval. Observe that, as we defined the stretch to either preserve or reverse the orientation in $\mathcal{S}$, one could say that $F_n$ resemble continuity on $\mathcal{S}$. Thus for each open $V\subset \mathcal{S}$ one can find a neighborhood $x\in U\subset [c, c_0]$ such that its image will not stretch outside $V$.
\end{proof}

\begin{lemma}
    The sequence $(F_n)_{n=0}^\infty$ satisfies the conditions from Theorem \ref{GMT}.
\end{lemma}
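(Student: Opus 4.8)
The plan is to verify the three conditions of Theorem~\ref{GMT} for the sequence $(F_n)_{n=0}^\infty$ constructed in Section~\ref{f_map}, working on the edge $[c,c_0]$ (the case $[c,c_1]$ being identical). The key structural observation, which I would isolate first, is that the inductive construction only ever \emph{refines} the picture: at step $n+1$ we modify $F_n$ only on those maximal subintervals $I$ with $F_n(I)=X_{c_{\omega_1\ldots\omega_n}}$ a full subdendrite, and on such an $I$ we replace the constant value $X_{c_{\omega_1\ldots\omega_n}}$ by values that are either subdendrites $X_{c_{\omega_1\ldots\omega_n a}}\subset X_{c_{\omega_1\ldots\omega_n}}$ or linear stretches over sub-arcs $[c_{\omega_1\ldots\omega_n},c_{\omega_1\ldots\omega_n a})\subset X_{c_{\omega_1\ldots\omega_n}}$; on every other point $F_{n+1}=F_n$. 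Hence condition~(1), $F_{n+1}(x)\subset F_n(x)$ for all $x$, follows immediately by induction, with the base case $F_1(x)\subset\mathcal{S}=F_0(x)$ being trivial.

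For condition~(2), $\lim_{n\to\infty}\diam(F_n(x))=0$, I would distinguish two cases for a fixed $x\in[c,c_0]$. If at some finite stage $n$ the point $x$ lands in (the interior of) a subinterval that is stretched \emph{linearly} over an arc of the form $[c_{\omega_1\ldots\omega_m},c_{\omega_1\ldots\omega_m a})$, then $F_k(x)$ is a single point for all $k\ge n$ and there is nothing to prove. Otherwise $x$ lies, for every $n$, in a maximal subinterval $I_n$ with $F_n(I_n)=X_{c_{\omega_1\ldots\omega_n}}$, and the encoding sequences are nested (each $\omega_1\ldots\omega_{n+1}$ extends $\omega_1\ldots\omega_n$ by one binary digit), so $F_n(x)=X_{c_{\omega_1\ldots\omega_n}}$ is a subdendrite rooted at a branching point of level at least $n$ in $\mathcal{S}$. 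By the choice of the scale coefficients $\tfrac{1}{2^{2n}}$ in the metric $d_{\mathcal{S}}$ (cf.\ the Remark following the continuity of $x\mapsto\gend(x)$), the diameter of any subdendrite rooted at level $n$ is bounded by the total length of all paths descending from that root, which is $\tfrac{1}{2^{2n}}$; this tends to $0$, giving~(2).

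Condition~(3), $F_n([c,c_0]\cup[c,c_1])=\mathcal{S}$ for every $n$, is the step I expect to require the most care, so I would prove it by induction on $n$, tracking that the images always \emph{tile} the relevant subdendrite. For $n=0$ it is immediate. For $n=1$, the edge $[c,c_0]$ is cut into eight pieces: the four outer linear stretches cover $[c,c_0)$ (twice) and $[c,c_1)$ (twice), while $F_1\equiv X_{c_0}$ on $[k_1,k_3]$ and $F_1\equiv X_{c_1}$ on $[j_1,j_3]$; since $X_{c_0}\cup X_{c_1}=\mathcal{S}\setminus\{c\}$ and the stretches recover the missing arcs, we get $F_1([c,c_0])=\mathcal{S}$, a fortiori including $[c,c_1]$. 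For the inductive step, on a maximal interval $I$ with $F_n(I)=X_{c_{\omega_1\ldots\omega_n}}$, the new construction sets $F_{n+1}$ to cover $X_{c_{\omega_1\ldots\omega_n 0}}$, $X_{c_{\omega_1\ldots\omega_n 1}}$ (or the single child $X_{c_{\omega_1\ldots\omega_n a}}\cup[c_{\omega_1\ldots\omega_n},c_{\omega_1\ldots\omega_n a}]$) together with linear stretches over the connecting arcs $[c_{\omega_1\ldots\omega_n},c_{\omega_1\ldots\omega_n a})$; since the two child subdendrites together with the branching point $c_{\omega_1\ldots\omega_n}$ and these connecting arcs reconstitute exactly $X_{c_{\omega_1\ldots\omega_n}}$, we obtain $F_{n+1}(I)=X_{c_{\omega_1\ldots\omega_n}}=F_n(I)$. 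Since $F_{n+1}=F_n$ off these intervals, $F_{n+1}([c,c_0])=F_n([c,c_0])=\mathcal{S}$, completing the induction. Combining the three verified conditions with Theorem~\ref{GMT} yields a continuous surjection $f$ on $[c,c_0]\cup[c,c_1]$, as desired.
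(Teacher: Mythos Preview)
Your proof is correct and follows essentially the same approach as the paper: both verify conditions (1)--(3) of Theorem~\ref{GMT} directly by tracking the two cases (stabilising vs.\ non-stabilising) for a fixed $x$, using the geometric decay of subdendrite diameters for (2), and observing that each refinement step reconstitutes the full subdendrite for (3). One minor slip: the equality $X_{c_0}\cup X_{c_1}=\mathcal{S}\setminus\{c\}$ is not correct as written (these subdendrites omit the first-level edges), but you immediately recover the missing arcs via the linear stretches, so the argument stands.
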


\begin{proof}
Let $x\in [c,c_0]$.
\begin{enumerate}
    \item Consider the sequence of images $(F_n)(x)_{n=0}^{\infty}$. It is clear that $F_0(x)=\mathcal{S}$. Observe that there are two cases:
    \begin{enumerate}
        \item there exists $n\in\mathbb{N}$ such that $F_n(x)=y\in\mathcal{S}$, i.e. the sequence stabilises,
        \item for each $n\in\mathbb{N}$ the image $F_n(x)$ is a subdendrite in $\mathcal{S}$.
    \end{enumerate}
    Thus $(F_n)(x)_{n=0}^{\infty}$ is a sequence of subdendrites in $\mathcal{S}$ which either become trivial or not. Assume that $F_n(x), F_{n+1}(x)$ are two consecutive nontrivial dendrites. Since in $F_{n+1}$ we modify the action on intervals put previously by $F_n$ into a dendrite $X_{c_{\omega_1...\omega_n}}$ by either distributing it over the edge $[c_{\omega_1...\omega_n}, c_{\omega_1...\omega_{n+1}}]$ or putting it into the dendrite $X_{c_{\omega_1...\omega_{n+1}}}$, we have that $F_n(x)=X_{c_{\omega_1...\omega_n}}, F_{n+1}(x)=X_{c_{\omega_1...\omega_{n+1}}}$ and consequently $F_{n+1}(x)\subset F_n(x)$. Thus we have
    \begin{enumerate}
        \item $F_0(x)\supset F_1(x)\supset...\supset  F_n(x)= F_{n+1}(x)=...$ when the sequence stabilises,
       \item $F_0(x)\supset F_1(x)\supset F_2(x)\supset...$ when it does not stabilise.
    \end{enumerate}
    In both cases one can see that condition 1 from Theorem \ref{GMT} is satisfied.
    \item Let us continue the analysis from the previous paragraph. Observe that whenever $X_{c_{\omega_1...\omega_{n+1}}}\subset X_{c_{\omega_1...\omega_n}}$, then we have that $\diam(X_{c_{\omega_1...\omega_{n+1}}})\leq \frac{1}{4}\diam(X_{c_{\omega_1...\omega_n}})$. Thus
    \begin{enumerate}
        \item if the sequence $(F_n)(x)$ stabilises, then $$\diam(F_n(x))=\diam(F_{n+1}(x))=...=0,$$
        \item if the sequence does not stabilise, then $$\diam F_{n+1}(x)\leq \frac{1}{4}\diam(F_{n}(x))\leq (\frac{1}{4})^{n+1} F_0(x)\leq(\frac{1}{4})^{n+1}.$$
    \end{enumerate} In both cases one can see that condition 2 from Theorem \ref{GMT} is satisfied.
    \item Observe that for each $n\in\mathbb{N}$ the map $F_n$ covers piecewise linearly the first $n$ levels in $\mathcal{S}$ and then assigns every subdendrite of the form $X_{c_{\omega_1...
    \omega_n}}$ to an interval $I_{\omega_1...\omega_n}\subset [c, c_0]\cup [c, c_1]$. Thus, when considering the total image of $F_n$, one can see that $\bigcup_{x\in[c, c_0]\cup[c, c_1]} F_n(x)=\mathcal{S}$. Thus, condition 3 from Theorem \ref{GMT} is satisfied.
\end{enumerate}
\end{proof}

Let $f\colon [c,c_0]\cup [c,c_1]\to \mathcal{S}$ be fedined by $f(x)=\bigcap_{n\in\mathbb{N}} F_n(x)$ for all $x\in [c,c_0]\cup [c,c_1]$. By Theorem \ref{GMT}, $f$ is continuous and onto. Moreover, $f(c)=f(c_0)=f(c_1)=c$.\newline

\subsection{Exact map on $\mathcal{S}$}\label{modify_F}
Now we modify the map $F$ from Section \ref{F_map}. We simply replace the action on $[c, c_0]\cup[c, c_1]$ in the original map $F\colon \mathcal{S}\to\mathcal{S}$ with the function $f\colon [c,c_0]\cup [c,c_1]\to \mathcal{S}$.

\begin{corollary}\label{F_mod_cont}
    The modified action $F_{\text{mod}}\colon\mathcal{S}\to\mathcal{S}$ is continuous on $\mathcal{S}$ and the endpoint subsystem $(\text{End}(\mathcal{S}), F_{\text{mod}}|_{\text{End}(\mathcal{S})})$ is conjugated to the initially chosen Cantor system.
\end{corollary}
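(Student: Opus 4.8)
The plan is to verify the two claimed properties separately, reusing as much of the machinery from Section \ref{F_map} as possible. For continuity, the only place where $F_{\text{mod}}$ differs from $F$ is on the two first-level edges $[c,c_0]\cup[c,c_1]$, where it is replaced by the map $f$ produced in Section \ref{f_map}. First I would observe that $f$ is continuous by Theorem \ref{GMT} (this has already been established), so $F_{\text{mod}}$ is continuous when restricted to $[c,c_0]\cup[c,c_1]$, and it agrees with the original $F$ on the complement $\mathcal{S}\setminus\bigl((c,c_0]\cup(c,c_1]\bigr)$, where $F$ is continuous by Lemma \ref{F_cont}. Thus continuity of $F_{\text{mod}}$ reduces to checking agreement of the two pieces along the common boundary, which consists only of the points $c$, $c_0$, $c_1$. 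Here we use the explicit endpoint values recorded at the end of Section \ref{f_map}: $f(c)=f(c_0)=f(c_1)=c$, which coincide with the values $F(c)=c$ and $F(c_0)=F(c_1)=c$ coming from the definition of $F$ on the first level in Section \ref{F_map}. Since the two continuous pieces agree on the (closed) overlap, the pasting lemma gives continuity of $F_{\text{mod}}$ on all of $\mathcal{S}$.

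For the statement about the endpoint subsystem, the key point is that the modification is confined to the interiors of the two first-level edges, and in particular it does not touch $\text{End}(\mathcal{S})$ nor any branching point $c_{\phi(U)}$ with $U\in\mathcal{U}_i$, $i\geq 1$. Consequently $F_{\text{mod}}$ and $F$ induce exactly the same action on the set of branching points of $\mathcal{S}$, and therefore the same action on $\text{End}(\mathcal{S})$: for $x_{\text{end}}\in\text{End}(\mathcal{S})$ obtained as a limit of branching points $c_{\phi(U_i)}$ along an inverse sequence $(U_i)\in V_{\mathrm{U}}$, the image $F_{\text{mod}}(x_{\text{end}})$ is computed by the same limiting procedure as in Lemma \ref{F_cont}, using only the values of $F_{\text{mod}}$ on branching points deep in $\mathcal{S}$, which are unchanged. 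Hence $F_{\text{mod}}|_{\text{End}(\mathcal{S})}=F|_{\text{End}(\mathcal{S})}$, and the conclusion follows verbatim from Lemma \ref{conjugate_endpoint}, which identifies $(\text{End}(\mathcal{S}),F|_{\text{End}(\mathcal{S})})$ with $(\mathcal{C},f)$ via the composition $f_3\circ f_2\circ f_1$.

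I do not expect a serious obstacle here; the corollary is essentially a bookkeeping statement confirming that the surgery of Section \ref{modify_F} is local and respects the endpoint data. The one point deserving explicit care is the verification that the extension of $F_{\text{mod}}$ to $\text{End}(\mathcal{S})$ remains well-defined in the sense of Lemma \ref{F_cont}: one must note that for a sequence $(x_n)\subset\mathcal{S}\setminus\text{End}(\mathcal{S})$ converging to $x_{\text{end}}$, all but finitely many $x_n$ lie inside a subdendrite $X_{c_{\phi(U_i)}}$ with $i$ arbitrarily large, and on such a subdendrite $F_{\text{mod}}$ coincides with $F$ (the modified region meets no $X_{c_{\phi(U_i)}}$ with $i\geq 1$), so the Cauchy argument of Lemma \ref{F_cont} applies unchanged. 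After that, the exactness of $F_{\text{mod}}$ — which is the real content of Theorem B and is treated separately in what follows — is the only substantive thing left, but it is outside the scope of this corollary.
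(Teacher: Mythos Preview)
Your proposal is correct and follows essentially the same route as the paper: continuity via the gluing of $F$ (continuous by Lemma~\ref{F_cont}) and $f$ (continuous by Theorem~\ref{GMT}) along the agreement points $c,c_0,c_1$, and the conjugacy via Lemma~\ref{conjugate_endpoint} together with the observation that the modification does not touch $\mathrm{End}(\mathcal{S})$. Your write-up is simply more explicit (the pasting lemma, the check that the limiting definition at endpoints is unaffected), but there is no substantive difference in approach.
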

\begin{proof}
    The continuity follows from the fact that both $F$ and $f$ are continuous by Lemma \ref{F_cont} and Theorem \ref{GMT} respectively, and both actions agree on $c, c_0, c_1$. The conjugacy follows from Lemma \ref{conjugate_endpoint}, as the modification does not affect the action on $\mathrm{End}(\mathcal{S})$.
\end{proof}
\begin{lemma}\label{F_mod_exact}
    The action $F_{\text{mod}}\colon\mathcal{S}\to\mathcal{S}$ is exact.
\end{lemma}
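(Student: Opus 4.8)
The plan is to show that for every nonempty open $U\subset\mathcal{S}$ there is $n$ with $F_{\text{mod}}^n(U)=\mathcal{S}$, by combining the ``flooding'' argument from Lemma~\ref{eventually_cover} with the surjectivity of the modified first-level map $f$. First I would reduce to the case where $U$ contains an entire first-level edge. Indeed, by exactly the reasoning in the proof of Lemma~\ref{eventually_cover} — every linearity interval is stretched by a factor $\lambda>\tfrac54>1$, so iterates of any open interval eventually contain a branching point, whose image after finitely many further steps is the root $c$, and then (using $F_{\text{mod}}(c)=c$ together with $\lambda>1$) eventually contain a whole edge from level $1$, say $e_0=[c,c_0]$ — there is $k$ with $e_0\subset F_{\text{mod}}^k(U)$. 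Note this part of the argument is unaffected by the modification, since it only uses the action of $F_{\text{mod}}$ on edges below level $1$, which coincides with $F$.

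Next I would exploit that $F_{\text{mod}}$ restricted to $e_0$ is the map $f$ of Section~\ref{f_map}, which is onto: $f(e_0)=\mathcal{S}$. Hence $F_{\text{mod}}^{k+1}(U)\supset f(e_0)=\mathcal{S}$, i.e.\ $F_{\text{mod}}^{k+1}(U)=\mathcal{S}$, and $F_{\text{mod}}$ is exact. (One should double-check that $f$ is genuinely surjective onto all of $\mathcal{S}$ and not just onto $X_{c_0}\cup X_{c_1}$; this is precisely condition~(3) verified in the previous lemma, $\bigcup_{x\in[c,c_0]\cup[c,c_1]}F_n(x)=\mathcal{S}$, combined with Theorem~\ref{GMT}, so $f([c,c_0]\cup[c,c_1])=\mathcal{S}$ and in particular already $f([c,c_0])$ together with $f([c,c_1])$ cover $\mathcal{S}$; in fact the construction makes $f([c,c_0])=\mathcal{S}$ on its own, since the first-level map was defined analogously and independently on each of the two components.)

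The one point requiring a little care — and the main obstacle — is the first reduction: I must make sure that the ``flooding'' mechanism of Lemma~\ref{eventually_cover} still forces an open set to swallow a level-$1$ edge \emph{after} the modification. The subtlety is that the modified map no longer stretches $e_0$ linearly over levels $1$ through $n_1$; instead $f$ collapses large subintervals of $e_0$ onto subdendrites. However, the argument only needs two things about the dynamics strictly below level $1$: that each linearity interval there is expanded by $\lambda>1$, and that $F_{\text{mod}}(c)=c$ with some edge adjacent to $c$ being covered once an image reaches $c$. The first holds because the modification touches only $[c,c_0]\cup[c,c_1]$; the second holds because $f$ is onto, so in particular $e_0\subset f(e_0)=F_{\text{mod}}(e_0)$, giving the needed self-covering of a level-$1$ edge as soon as an iterate of $U$ contains any such edge or, a fortiori, contains $c$ in the relative interior of a linearity interval that expands across it. So once any forward image of $U$ meets $c$ after the collapse-free part of the dynamics has pushed it there, one more application of $f$ produces all of $\mathcal{S}$. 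Assembling these observations gives exactness.
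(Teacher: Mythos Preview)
Your overall strategy matches the paper's: reduce to the case where some forward image of $U$ contains a full level-$1$ edge, then use surjectivity of $f$ on that edge to finish. The problem is in the reduction step. You claim that ``this part of the argument is unaffected by the modification, since it only uses the action of $F_{\text{mod}}$ on edges below level $1$'', but that is false on two counts. First, the initial open set $U$ may itself be a small subarc of $[c,c_0]$, where $F_{\text{mod}}=f\neq F$; nothing you wrote explains why iterates of such an interval expand at all. Second, even when lower-level dynamics carries the orbit to $c$, the step from ``the image contains $c$'' to ``the image contains all of $[c,c_0]$'' uses the behaviour of $F_{\text{mod}}$ \emph{on} the level-$1$ edges in a neighbourhood of $c$, not below them. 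In both places you need to know that the linearity intervals of $f$ on $[c,c_0]$ are uniformly expanding; simply quoting $\lambda>\tfrac54$ from Lemma~\ref{eventually_cover} does not cover the modified piece.

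The paper fills precisely this gap: it verifies that $f$ is countably piecewise linear on $[c,c_0]$ with every linearity interval stretched by the constant factor $\lambda=8$ (at each inductive stage one subdivides an interval of relative length $\tfrac14$ into eighths and maps each eighth onto an edge one level deeper, and the edge-length ratio between successive levels is $4:1$). It also notes that $F_{\text{mod}}^{-1}(\mathrm{End}(\mathcal{S}))$ is a Cantor set, so any open arc in a level-$1$ edge meets a linearity interval of $f$. With $\lambda=8$ in hand, the expansion argument runs uniformly, including for open sets lying inside $[c,c_0]$ and for the growth of a neighbourhood of $c$ into a full level-$1$ edge. Your final paragraph gestures toward this (``contains $c$ in the relative interior of a linearity interval that expands across it''), but you never establish the expansion for $f$; that computation is the missing ingredient. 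As a minor point, $f$ does not ``collapse large subintervals onto subdendrites'' --- that describes the auxiliary set-valued maps $F_n$, whereas $f$ itself is single-valued and piecewise linear.
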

\begin{proof}
    We will follow the reasoning presented in the proof of Lemma \ref{eventually_cover}. Before that, 
    let us start with an observation on the modified part of the original map $F\colon\mathcal{S}\to\mathcal{S}$. Namely, observe that the first level of edges is piecewise linearly stretched on $\mathcal{S}$ and the stretching coefficient of those stretches on each linearity interval is the same, $\lambda=8$. The main difference for top level, however, is that it contains countably many intervals of linearity in it, and there is a Cantor set in the complement of their interiors in $[c,c_0]$, resp. $[c,c_1]$. Let us explain the argument behind this claim in more detail.
    \par The characteristics of $\mathcal{S}$ holds in the same way as in the proof of Lemma \ref{eventually_cover}, so yet again one can see that the ratio between lengths of edges from two consecutive levels is $4:1$. Then inductively we can show the above assertion. Firstly observe that, since we divided the edge into eight intervals of equal length in the first step of the induction in the construction in Section \ref{f_map}, the first four stretches are those with intervals of $\frac{1}{8}$ of the original length of the edge. Then, as we modified the map inductively, observe that in each step we replicated the construction done in the first step of the induction on two intervals, which are both $\frac{1}{4}$ of the length of the interval modified in the previous step of the induction. Thus we perform similar stretches with the same coefficient $\lambda=8$, since the ratio between consecutive edges and the ratio between consecutive modification intervals are the same. Since the construction is insensitive on whether there are two edges attached to any branching point $c_{\omega}\in\mathcal{S}$ or not, as we duplicate covering if it is not the case, every linearity interval is evenly distributed on its image in $\mathcal{S}$. 
    \par An immediate consequence of the above reasoning is that $F_\text{mod}$ is a piecewise linear transformation of $\mathcal{S}$ with each linear stretch having the stretching coefficient $\lambda>4$. This follows from the fact that the original map $F$ had these stretching coefficients bounded by $\frac{5}{4}$ for linearity intervals from the first level in $\mathcal{S}$, while every other linearity interval admitted a stretch with $\lambda>4$.
    \par Fix some open interval $I\subset\mathcal{S}$. One can see that $F_\text{mod}^{-1}(\mathrm{End}(\mathcal{S}))$ forms a Cantor set in $\mathcal{S}$ and thus $I$ is not a subset of $F_\text{mod}^{-1}(\mathrm{End}(\mathcal{S}))$.
    If $I$ is contained entirely in a single linearity interval $J$, then by the fact that $J$ is stretched with $\lambda>4$ there exists $k\in\mathbb{N}$ such that $F^k_\text{mod}(I)$ will contain a common endpoint of two linearity intervals. If $I\cap J_1, I\cap J_2\neq\emptyset$ for two adjacent linearity intervals then clearly $I$ contains a common endpoint $z$ of $J_1, J_2$ and $F_\text{mod}(z)$ is a branching point in $\mathcal{S}$.
    \par Thus, there exists $k_1\in\mathbb{N}$ such that $F^{k_1}_\text{mod}(I)$ will contain a branching point $b$. Moreover, $F_\text{mod}(b)$ is a branching point $c_{\phi(U)}$ for some $U\in\mathcal{U}_i$ and thus $F_\text{mod}^{i+1}(b)=c$. If the image of $I$ after $k_1+i+1$ iterations does not contain a whole edge from level $1$, then by the fact that $F_\text{mod}(c)=c$ and both intervals of linearity adjacent to $c$ have length $1/8$ and $\lambda=8$, we see that 
    there exists $k_2\in\mathbb{N}$ such that for $F^{k_1+k_2+i+1}_\text{mod}(I)$ this is true.
    \par Then, since $F_\text{mod}([c, c_0])=F_\text{mod}([c, c_1])=\mathcal{S}$, we have that $F_\text{mod}^{k_1+k_2+i+2}(I)=\mathcal{S}$, completing the proof.

\end{proof}
\begin{proof}[\textbf{Proof of Theorem B}]
Fix a Cantor dynamical system $(\mathcal{C},f)$. Let $\mathcal{S}$ be the Gehman dendrite induced by $\mathcal{C}$ as presented in Section \ref{sec:setup}. Let $F$ be 
 the map induced on $\mathcal{S}$ by $(\mathcal{C},f)$ defined as in Section \ref{F_map} and $F_\text{mod}$ be obtained by 
 the modification 
presented in Section \ref{modify_F}.

The map $G$ is continuous and $\text{End}(\mathcal{S})$ is an invariant subset in the dynamical system $(\mathcal{S},F_\text{mod})$, which forms a endpoint subsystem
$(\text{End}(\mathcal{G}), F_\text{mod}|_{\text{End}(\mathcal{S})})$ conjugate to the initially chosen Cantor system $(\mathcal{C}, f)$, as shown in Corollary \ref{F_mod_cont}. 
Finally, by Lemma \ref{F_mod_exact}, dynamical system $(\mathcal{S}, F_\text{mod})$ is exact. The proof is complete.
\end{proof}

\section*{Acknowledgements}

The authors are grateful to Jozef Bobok, Klara Karasová, Benjamin Vejnar and Dominik Kwietniak for numerous conversations and helpful comments throughout the work on this paper. The work was partially supported by the project AIMet4AI No. CZ.02.1.01/0.0/0.0/17\_049/0008414.

\bibliographystyle{amsplain}

\end{document}